\numberwithin{equation}{section}
\newtheorem{theorem}{Theorem} 
\newtheorem{proposition}{Proposition} 
\newtheorem{lemma}{Lemma} 
\theoremstyle{definition}
\newtheorem{remark}{Remark} 
\newtheorem{assumption}{Assumption}
\renewcommand{\theassumption}{\Alph{assumption}}
\renewcommand{\em}[1]{\normalem \em{#1}}
\renewcommand{\emph}[1]{\normalem \emph{#1}}
\newcommand{\ie}{{\em i.e.}, }
\newcommand{\cf}{{\em cf.\ }}
\newcommand{\nn}{\mathbb{N}} 
\newcommand{\real}{\mathbb{R}} 
\newcommand{\Fix}{\mathrm{Fix}} 
\newcommand{\norm}[1]{\left\Vert {#1} \right\Vert} 
\DeclareMathOperator*{\argmin}{\arg\!\min}
\newcommand{\prox}{\mathrm{prox}} 
\newcommand{\act}[1]{\left\langle {#1} \right\rangle} 
\newcommand{\seq}[2]{\left\{{#1}_{{#2}}\right\}_{{#2} \in \mathbb{N}}}
\newcommand{\Seq}[2]{\left\{{#1}^{{#2}}\right\}_{{#2} \in \mathbb{N}}}
\newcommand{\bo}{{\bf 0}}
\newcommand{\bu}{{\bf u}}
\newcommand{\bx}{{\bf x}}
\newcommand{\by}{{\bf y}}
\newcommand{\bz}{{\bf z}}
\newcommand{\bb}{{\bf b}}
\newcommand{\bba}{{\bf A}}
\newcommand{\bbq}{{\bf Q}}
\newcommand{\bbl}{{\bf L}}
\newcommand{\bbi}{{\bf I}}
\title{A First Order Method for Solving Convex Bi-Level Optimization Problems}
\author{Shoham Sabach\footnote{Department of Industrial Engineering and Management, Technion---Israel Institute of Technology, Haifa 3200003, Israel. E-mail: ssabach@ie.technion.ac.il.} \and Shimrit Shtern\footnote{Department of Industrial Engineering and Management, Technion---Israel Institute of Technology, Haifa 3200003, Israel. E-mail: shimrits@tx.technion.ac.il.}}
\date{\today}
\begin{document}
\maketitle

\begin{abstract}
	In this paper we study convex bi-level optimization problems for which the inner level consists of 
	minimization of the sum of smooth and nonsmooth functions. The outer level aims at minimizing a 
	smooth and strongly convex function over the optimal solutions set of the inner problem. We 
	analyze a first order method which is based on an existing fixed-point algorithm. Global sublinear 
	rate of convergence of the method is established in terms of the inner objective function values. 		
	
\end{abstract}
\section{Introduction} \label{Sec:Introduction}
	In this paper we are interested in the following bi-level optimization problem (where we use the 
	terminology of inner and outer levels). The \emph{outer} level is given by the following 
	constraint minimization problem
 	\begin{equation} \label{Prob:MNP} \tag{MNP}
		\min_{\bx \in X^{\ast}}  \omega\left(\bx\right),
 	\end{equation}
	where $\omega$ is a strongly convex and differentiable function while $X^{\ast}$ is the, assumed 
	nonempty, set of minimizers of the \emph{inner} level problem, which is the classical convex 
	composite model, given by
	\begin{equation} \label{Prob:P} \tag{P}
		\min_{\bx \in \real^{n}} \left\{ \varphi\left(\bx\right) := f\left(\bx\right) + g\left(\bx
		\right) \right\},
	\end{equation}
	where $f$ is a continuously differentiable function and $g$ is an extended valued (possibly 
	nonsmooth) function, see next section for precise assumptions. We denote the unique optimal 
	solution of problem \eqref{Prob:MNP} by $\bx_{mn}^{\ast}$, following the notation used in
	\cite{BS2014}.
\medskip

	The most known indirect method (the meaning of direct and indirect method will be made precise in 
	the following lines) for solving problem \eqref{Prob:MNP} is by the well-known Tikhonov 
	regularization \cite{TA77} which suggests solving the following alternative regularized problem, 
	for some $\lambda > 0$,
	\begin{equation} \label{Prob:regP} \tag{Q$_\lambda$}
		\min_{\bx \in \real^{n}}  \left\{ \varphi_{\lambda}\left(\bx\right) := \varphi\left(\bx\right) 
		+ \lambda\omega\left(\bx\right) \right\}.
	\end{equation}
	In \cite{FM1988} the authors treat the case that $g$ is an indicator function of a closed and 
	convex set $X$, and show that under some restrictive conditions including $X$ being a polyhedron, 
	there exists a small enough $\lambda^{\ast} > 0$ such that the optimal solution of problem 
	(Q$_{\lambda^{\ast}}$) is the optimal solution of problem \eqref{Prob:MNP}, see 
	\cite[Theorem 9]{FM1988}. However, in practice, even for this specific case, the value of 
	$\lambda^{\ast}$ is unknown, and so \eqref{Prob:regP} must be solved for a sequence of 
	regularizing parameters $\seq{\lambda}{k}$ for which $\lambda_{k} \rightarrow 0$ as $k \rightarrow 
	\infty$. In \cite{S07} Solodov showed that, provided that $\sum_{k = 1}^{\infty} \lambda_{k} =
	\infty$ and $g$ is again an indicator function of a closed and convex set, there is no need to 
	find the optimal solution of problem (Q$_{\lambda_{k}}$), $k \in \nn$, and it is sufficient to 
	approximate its solution by performing single projected gradient step on $\varphi_{\lambda_{k}}$ 
	for all $k \in \nn$. In the case that both $f$ and $\omega$ are differentiable with Lipschitz 
	continuous gradients, the generated sequence converges to the optimal solution of problem 
	\eqref{Prob:MNP}, even if $\omega$ is not strongly convex. Thus, the algorithm suggested in 
	\cite{S07} provides a \emph{direct} method for solving problem \eqref{Prob:MNP}. Another direct 
	approach to solve problem \eqref{Prob:MNP} is the \emph{Hybrid Steepest Descent Method} (HSDM) 
	presented in \cite[Section 17.3.2]{YYY11}, which was proved to converge to the optimal solution of 
	problem \eqref{Prob:MNP} provided that $\lambda_{k} \rightarrow 0$ as $k \rightarrow \infty$ and 
	$\sum_{k = 1}^{\infty} \lambda_{k} = \infty$. In \cite{NP11}, an extension of the HSDM is 
	suggested for the case where $g\left(\cdot\right) := 0$ and $\omega$ is not necessarily 
	differentiable or strongly convex but has bounded subgradients on the optimal set.
\medskip
	
	The major missing part of these papers is that while convergence was proven the convergence rates 
	of these algorithms are unknown. Very recently, a new direct first order method for solving 
	problem \eqref{Prob:MNP}, called the Minimal Norm Gradient (MNG) was proposed in \cite{BS2014}, 
	for which the authors proved an $O(1/\sqrt{k})$ rate of convergence result, in terms of the inner 
	objective function values. Even though the authors of \cite{BS2014} deal with the specific case of 
	problem \eqref{Prob:P} for which the nonsmooth function $g$ is assumed to be an indicator function 
	of a nonempty, closed and convex set, it seems that their analysis carries over even in the more 
	general setting of this paper, that is, for any convex and extended valued function $g$. The MNG 
	method is based on the cutting plane idea which means that at each iteration of the algorithm two 
	specific half-spaces are constructed and then a minimization of the outer objective function 
	$\omega$ over the intersection of these half-spaces is solved (see more detail in Section 
	\ref{SSec:MNG}). In some cases, computing a solution to this minimization task can be done 
	analytically. However, for some choice of outer function $\omega$ obtaining a solution might be 
	computationally expensive and require an additional nested algorithm to approximate its solution.
\medskip

	Inspired by \cite{BS2014} and motivated by the limitations of the MNG method (as discussed in 
	Section \ref{SSec:MNG}) we are interested in pursuing the research on bi-level optimization 
	problems in the following way. We study and analyze a first order method\footnote{which is based 
	on an existing algorithm, proposed in \cite{Xu2004}, for solving a certain class of fixed point 
	problems (see precise details in Section \ref{SSec:XuMethod}).} for solving problem 
	\eqref{Prob:MNP} with a non-asymptotic $O(1/k)$ global rate of convergence in terms of the inner 
	objective function values, which we call BiG-SAM. In addition to the improved rate of convergence, 
	BiG-SAM seems to be simpler and cheaper than the MNG method in the following sense. The operation 
	in the algorithm which relates to the inner problem is of the same complexity as in the MNG 
	method. On the other hand, the operation with respect to the outer problem is very simple in our 
	case, and consists of computing the gradient of the objective function $\omega$, in the MNG method 
	two tasks are needed: computation of the gradient of $\omega$ and a minimization of $\omega$ over 
	the intersection of two half-spaces, whose computational cost highly depend on the function 
	$\omega$, as discussed below (see Section \ref{SSec:XuMethod}).
\medskip

	Another contribution of this paper is the fact that BiG-SAM can also be used in situations for 
	which the outer objective function $\omega$ is strongly convex but not necessarily smooth. In this 
	case, we show that BiG-SAM solves problem \eqref{Prob:MNP} but with respect to the Moreau envelope 
	of $\omega$ instead of $\omega$ itself. In this case we offer a new concept of measuring rate of 
	convergence. This property of BiG-SAM allows considering outer functions which are not necessarily 
	smooth and include, for example, functions with sparsity terms (see more details in Section 
	\ref{SSec:SAMNonsmooth}).
\medskip

	The paper is organized in the following way. In Section \ref{Sec:AlgFrameMathTools} we discuss the 
	optimization framework of the class of bi-level problems, then we give all notations and auxiliary 
	results that are needed for the forthcoming sections. We conclude this section with a short 
	overview of the MNG method developed in \cite{BS2014} (see Section \ref{SSec:MNG}). Section 
	\ref{Sec:SAM} is devoted to an algorithm which forms the basis of BiG-SAM. We first discuss it in 
	its generality for solving a certain class of fixed-point problems (see Section 
	\ref{SSec:XuMethod}) and then we specify it for solving the bi-level problems described in Section 
	\ref{SSec:BiLevelOpt}. In Section \ref{Sec:ConvergenceAnalysis} we prove rate of convergence 
	results of BiG-SAM. This section also includes our results in the case where the outer function is 
	not necessarily smooth. Section \ref{Sec:Numerical Experiments} contains numerical experiments 
	comparing MNG to BiG-SAM and showing its computational superiority in obtaining faster rates.
\medskip

	Throughout the paper we denote vectors by boldface letters. The notation $\act{\cdot , \cdot}$ is 
	used to denote the inner product of two vectors and $\norm{\cdot}$ is the norm associated with 
	this inner product, unless stated otherwise.

\section{Optimization Framework and Mathematical Tools} \label{Sec:AlgFrameMathTools}
\subsection{Convex Bi-Level Optimization} \label{SSec:BiLevelOpt}
	In this paper we are focusing on bi-level optimization problems which are formulated as follows. 
	We first discuss the inner level problem which is given by,
	\begin{equation} \label{Prob:PP} \tag{P}
		\min_{\bx \in \real^{n}} \left\{ \varphi\left(\bx\right) := f\left(\bx\right) + g\left(\bx
		\right) \right\}.
	\end{equation}
	The standing assumption on the functions of the inner level problem is recorded now.
	\begin{assumption} \label{A:Composite}
		\begin{itemize}
			\item[$\rm{(i)}$] $f : \real^{n} \rightarrow \real$ is convex and continuously 
				differentiable such that its gradient is Lipschitz with constant $L_{f}$, that is,
				\begin{equation*}
					\norm{\nabla f\left(\bx\right) - \nabla f\left(\by\right)} \leq L_{f}\norm{\bx -
					\by}, \quad \forall \,\, \bx , \by \in \real^{n}.
				\end{equation*}
			\item[$\rm{(ii)}$] $g : \real^{n} \rightarrow \left(-\infty , \infty\right]$ is proper, 
				lower semicontinuous and convex.
			\item[$\rm{(iii)}$] The set $X^{\ast}$ of all optimal solutions of problem \eqref{Prob:P} 
				is nonempty, that is, $X^{\ast} \neq \emptyset$.
		\end{itemize}
	\end{assumption}
	Problem \eqref{Prob:P}, which consists of minimizing the sum of a smooth function $f$ and a 
	possibly nonsmooth function $g$, is one of the most studied models in modern optimization with a 
	huge body of literature (see, for instance, \cite{BT10} and the references therein). The basic 
	algorithm for solving problem \eqref{Prob:P} is the so called \emph{Proximal Gradient} (PG) or 
	proximal forward-backward method (see \cite{B77,P79} for the origin of the algorithm and 
	\cite{BT09} for the rate of convergence result including an accelerated version), which 
	iteratively generates a sequence $\Seq{\bx}{k}$ starting from an arbitrary point $\bx^{0} \in 
	\real^{n}$ via the following rule
	\begin{equation}
		\bx^{k + 1} = \prox_{tg}\left(\bx^{k} - t\nabla f\left(\bx^{k}\right)\right), \quad k \in \nn,
	\end{equation}
	for some step-size $t > 0$. The main operation of this algorithm is the computation of the 
	\textit{Moreau proximal mapping} of a proper, lower semicontinuous and convex function $h : 
	\real^{n} \rightarrow \left(-\infty , \infty\right]$ which is denoted and defined by
	\begin{equation}
		\prox_{h}\left(\bx\right) := \argmin_{\bu \in \real^{n}} \left\{ h\left(\bu\right) + \frac{1}
		{2}\norm{\bu - \bx}^{2} \right\}.
	\end{equation}
	The PG method can also be seen as a fixed-point algorithm where the iterated mapping is given 
	(using the notation of \cite{BS2014}) by
	\begin{equation} \label{ProxGradMap}
		T_{t}\left(\bx\right) := \prox_{tg}\left(\bx - t\nabla f\left(\bx\right)\right),
	\end{equation}
	and is called the \emph{prox-grad mapping}. In the case where $g$ is the indicator function 
	$\delta_{X}$ of a set $X$, defined to be zero on $X$ and $+\infty$ on $\real^{n} \setminus X$, the 
	prox-grad mapping coincides with the \emph{proj-grad mapping} (which is discussed in
	\cite{BS2014,N04}), since in this case the proximal mapping of $g$ is exactly the orthogonal 
	projection onto $X$. The prox-grad mapping possesses the following two important properties which 
	are relevant to our analysis (see \cite[Proposition 12.27, Page 176]{BC2011-B} and 
	\cite[Section 2.3.2, Page 48]{BT10}). The second property characterizes the set of all fixed 
	points of $T_{t}$, which is denoted by $\Fix(T_{t})$ and defined by $\Fix(T_{t}) = \left\{ \bx \in 
	\real^{n} : \, T_{t}\left(\bx\right) = \bx \right\}$.
	\begin{lemma} \label{L:ProxGradProp}
		\begin{itemize}
			\item[$\rm{(i)}$] The prox-grad mapping $T_{t}$ is nonexpansive for all $t \in \left(0 , 
				1/L_{f}\right]$, that is,
				\begin{equation}
					\norm{T_{t}\left(\bx\right) - T_{t}\left(\by\right)} \leq \norm{\bx - \by}, 
					\quad \forall \,\, \bx , \by \in \real^{n}.
				\end{equation}
			\item[$\rm{(ii)}$] Fixed points of the prox-grad mapping $T_{t}$ are optimal solutions of 
				problem \eqref{Prob:P} and vice versa, that is, 
				\begin{equation}
					\bx \in X^{\ast} \quad \Leftrightarrow \quad \bx = T_{t}\left(\bx\right) = 
					\prox_{tg}\left(\bx - t\nabla f\left(\bx\right)\right).
				\end{equation}
				Therefore, we have that $\Fix(T_{t}) = X^{\ast}$ for all $t > 0$.
		\end{itemize}
	\end{lemma}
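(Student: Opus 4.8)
The plan is to prove the two parts separately, in each case reducing the statement about the composite mapping $T_{t}$ to elementary properties of its two building blocks: the forward gradient step $\bx \mapsto \bx - t\nabla f\left(\bx\right)$ and the proximal mapping $\prox_{tg}$.

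For part $\rm{(i)}$ I would first recall that the proximal mapping of a proper, lower semicontinuous and convex function is firmly nonexpansive, and in particular nonexpansive; hence it suffices to show that the forward step $G_{t} := I - t\nabla f$ is nonexpansive for $t \in \left(0 , 1/L_{f}\right]$, since $T_{t} = \prox_{tg} \circ G_{t}$ and a composition of nonexpansive mappings is nonexpansive. The key ingredient is the Baillon--Haddad theorem: since $f$ is convex and differentiable with $L_{f}$-Lipschitz gradient, $\nabla f$ is $\left(1/L_{f}\right)$-cocoercive, that is,
\begin{equation*}
	\act{\nabla f\left(\bx\right) - \nabla f\left(\by\right) , \bx - \by} \geq \frac{1}{L_{f}} \norm{\nabla f\left(\bx\right) - \nabla f\left(\by\right)}^{2}, \quad \forall \,\, \bx , \by \in \real^{n}.
\end{equation*}
Expanding $\norm{G_{t}\left(\bx\right) - G_{t}\left(\by\right)}^{2}$ and substituting the cocoercivity bound yields
\begin{equation*}
	\norm{G_{t}\left(\bx\right) - G_{t}\left(\by\right)}^{2} \leq \norm{\bx - \by}^{2} - t\left(\frac{2}{L_{f}} - t\right)\norm{\nabla f\left(\bx\right) - \nabla f\left(\by\right)}^{2},
\end{equation*}
and the last term is nonpositive for every $t \in \left(0 , 2/L_{f}\right]$, an interval which contains the claimed range $\left(0 , 1/L_{f}\right]$. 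This proves nonexpansiveness of $G_{t}$, and hence of $T_{t}$.

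For part $\rm{(ii)}$ I would characterize both $X^{\ast}$ and $\Fix(T_{t})$ through the same subdifferential inclusion. On one hand, since $\dom{f} = \real^{n}$ and $f$ is smooth, the sum rule gives $\partial \varphi = \nabla f + \partial g$, so Fermat's rule yields $\bx \in X^{\ast}$ if and only if $\bo \in \nabla f\left(\bx\right) + \partial g\left(\bx\right)$, i.e. $-\nabla f\left(\bx\right) \in \partial g\left(\bx\right)$. On the other hand, the defining characterization of the proximal mapping states that $\bx = \prox_{tg}\left(\bz\right)$ if and only if $\bz - \bx \in t\,\partial g\left(\bx\right)$. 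Taking $\bz = \bx - t\nabla f\left(\bx\right)$, the fixed-point equation $\bx = T_{t}\left(\bx\right)$ becomes $-t\nabla f\left(\bx\right) \in t\,\partial g\left(\bx\right)$, which, after dividing by $t > 0$, is precisely $-\nabla f\left(\bx\right) \in \partial g\left(\bx\right)$. Since the two conditions coincide for every $t > 0$, we conclude $\Fix(T_{t}) = X^{\ast}$.

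The routine parts are the algebraic expansion in $\rm{(i)}$ and the translation of the prox characterization in $\rm{(ii)}$; the only point that genuinely needs care is the validity of the subdifferential sum rule $\partial\left(f + g\right) = \nabla f + \partial g$, but this holds with no constraint qualification here because $f$ is finite and continuous everywhere on $\real^{n}$. Since both parts rest on standard convex-analytic facts---firm nonexpansiveness of the prox, the Baillon--Haddad cocoercivity inequality, and the prox--subdifferential identity---the proof is essentially a matter of assembling these ingredients in the right order, and I do not expect any substantial obstacle to arise.
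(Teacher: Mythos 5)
Your proof is correct. The paper does not actually prove this lemma—it only cites \cite[Proposition 12.27]{BC2011-B} and \cite[Section 2.3.2]{BT10}—and your argument (nonexpansiveness of $\prox_{tg}$ plus the Baillon--Haddad cocoercivity bound giving nonexpansiveness of $I - t\nabla f$ on the even larger range $t \in \left(0, 2/L_{f}\right]$ for part (i), and Fermat's rule combined with the resolvent characterization of the proximal mapping for part (ii)) is precisely the standard argument those references supply; note also that your part (ii) reasoning is valid for every $t > 0$, which is exactly what is needed for the final claim $\Fix(T_{t}) = X^{\ast}$ for all $t > 0$.
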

	The following result will be essential for the rate of convergence analysis presented in Section 
	\ref{Sec:ConvergenceAnalysis} (\cf \cite[Lemma 2.3, Page 190]{BT09}).
	\begin{proposition} \label{P:ProximalInequality}
		Suppose that Assumption \ref{A:Composite} holds true. Let $\bx \in \real^{n}$ and denote $\bx^ 
		{+} = T_{t}\left(\bx\right)$. Then, for any $ t \leq 1/L_{f}$ and $\bu \in \real^{n}$, we have
		\begin{equation}
			\varphi\left(\bx^{+}\right) - \varphi\left(\bu\right) \leq \frac{1}{t}\act{\bx - \bx^{+} ,
			\bx - \bu} - \frac{1}{2t}\norm{\bx - \bx^{+}}^{2}.
		\end{equation}
	\end{proposition}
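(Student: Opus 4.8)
The plan is to assemble the bound from three standard ingredients --- the descent lemma for the smooth part $f$, the gradient inequality coming from convexity of $f$, and the subgradient optimality condition of the proximal subproblem defining $\bx^{+}$ --- and then to reorganize the result by one elementary inner-product identity. Throughout, write $\bx^{+} = T_{t}(\bx) = \prox_{tg}(\bx - t\nabla f(\bx))$ and use the hypothesis $t \leq 1/L_{f}$ only through the inequality $L_{f} \leq 1/t$. Note that the point $\bu$ stays completely arbitrary, which is what will make the inequality usable against any comparison point in the later analysis.

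First I would extract the optimality condition of the prox step. Since $\bx^{+}$ minimizes $\bu \mapsto g(\bu) + \frac{1}{2t}\norm{\bu - \bz}^{2}$ with $\bz = \bx - t\nabla f(\bx)$, the first-order condition reads $\frac{1}{t}(\bx - \bx^{+}) - \nabla f(\bx) \in \partial g(\bx^{+})$. Feeding this subgradient into the subgradient inequality for the convex function $g$ gives, for every $\bu \in \real^{n}$,
\[
	g(\bx^{+}) - g(\bu) \leq \act{\tfrac{1}{t}(\bx - \bx^{+}) - \nabla f(\bx), \, \bx^{+} - \bu}.
\]

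For the smooth part I would combine two inequalities. The descent lemma, together with $L_{f} \leq 1/t$, yields $f(\bx^{+}) \leq f(\bx) + \act{\nabla f(\bx), \bx^{+} - \bx} + \frac{1}{2t}\norm{\bx^{+} - \bx}^{2}$, while convexity of $f$ gives $f(\bx) \leq f(\bu) - \act{\nabla f(\bx), \bu - \bx}$. Substituting the second into the first and collecting the gradient terms produces $f(\bx^{+}) - f(\bu) \leq \act{\nabla f(\bx), \bx^{+} - \bu} + \frac{1}{2t}\norm{\bx^{+} - \bx}^{2}$. Adding this to the $g$-bound above, the two inner products carrying $\nabla f(\bx)$ cancel exactly, leaving
\[
	\varphi(\bx^{+}) - \varphi(\bu) \leq \frac{1}{2t}\norm{\bx^{+} - \bx}^{2} - \frac{1}{t}\act{\bx - \bx^{+}, \, \bu - \bx^{+}}.
\]

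The last step is purely algebraic. Writing $\bu - \bx^{+} = (\bu - \bx) + (\bx - \bx^{+})$ inside the inner product expands $\act{\bx - \bx^{+}, \bu - \bx^{+}}$ as $-\act{\bx - \bx^{+}, \bx - \bu} + \norm{\bx - \bx^{+}}^{2}$; substituting this back turns the resulting $-\frac{1}{t}\norm{\bx - \bx^{+}}^{2}$ contribution together with the $+\frac{1}{2t}\norm{\bx^{+} - \bx}^{2}$ term into the single $-\frac{1}{2t}\norm{\bx - \bx^{+}}^{2}$ of the claim, while the cross term becomes $\frac{1}{t}\act{\bx - \bx^{+}, \bx - \bu}$. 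I expect the only real hazard to be bookkeeping --- keeping the signs and the order of the inner-product arguments straight through the substitution --- since no step needs the set $X^{\ast}$, nonexpansiveness of $T_{t}$, or any property of $\omega$; it is a self-contained one-step estimate for the prox-grad map.
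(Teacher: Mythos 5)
Your argument is correct and is exactly the standard proof of this inequality: the paper itself gives no proof but cites Beck--Teboulle's Lemma 2.3, whose derivation is precisely your combination of the prox optimality condition, the descent lemma with $L_{f}\leq 1/t$, convexity of $f$, and the final inner-product rearrangement. All signs and cancellations check out, so nothing needs to be changed.
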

	To conclude the discussion on the inner problem \eqref{Prob:P}, we note that it is well-known that 
	the PG method has an $O(1/k)$ rate of convergence in terms of the objective function values 
	$\varphi$ (see \cite{BT09}). In this respect, the method proposed in this paper shares the same 
	rate of convergence as the PG method while capable of solving the more complicated bi-level 
	problem described in detail now.
\medskip

	We now turn to discuss the outer problem. As mentioned in the introduction, the outer problem is 
	given by the following convex constrained problem
	\begin{equation} \label{Prob:MNPP} \tag{MNP}
		\min_{\bx \in X^{\ast}}  \omega\left(\bx\right),
	\end{equation}
	where $X^{\ast}$ is the optimal solution set of problem \eqref{Prob:P}. Here, using the same 
	terminology as in \cite{BS2014}, we refer to this outer problem as the \emph{Minimal Norm Problem} 
	(MNP). The standing assumption on the objective function $\omega$ of problem \eqref{Prob:MNP} is 
	recorded now.
	\begin{assumption} \label{A:Omega}
		\begin{itemize}
			\item[$\rm{(i)}$] $\omega : \real^{n} \rightarrow \real$ is strongly convex with parameter 
				$\sigma > 0$.
			\item[$\rm{(ii)}$] $\omega$ is a continuously differentiable function such that $\nabla 
				\omega$ is Lipschitz continuous with constant $L_{\omega}$.
		\end{itemize}
	\end{assumption}
	It should be noted that Assumption \ref{A:Omega}(i) here is slightly stronger than the 
	corresponding assumption given in \cite[Page 27]{BS2014}, since we do not only assume 
	differentiability of $\omega$, as assumed in \cite{BS2014}, but additionally assume that its 
	gradient is Lipschitz continuous. However, in practice, most of the interesting examples of 
	$\omega$ do satisfy this additional assumption. 
\medskip

	A function that would be essential in our paper is the well-known \textit{Moreau envelope} of a 
	given function $\omega$, which is denoted by $M_{s\omega}$, and defined by 
	\begin{equation}
		M_{s\omega}\left(\bx\right) = \min_{\bu \in \real^{n}} \left\{ \omega\left(\bu\right) + 
		\frac{1}{2s}\norm{\bu - \bx}^2 \right\}.
	\end{equation}
	It is well-known that $M_{s\omega}$ is continuously differentiable on $\real^{n}$ with an 
	${1}/{s}$-Lipschitz continuous gradient (see \cite[Proposition 12.29, Page 176]{BC2011-B}), which 
	is given by
	\begin{equation} \label{MoreauEnvelopeGradient}
		\nabla M_{s\omega}\left(\bx\right) = \frac{1}{s}\left(\bx - \prox_{s\omega}\left(\bx\right)
		\right).
	\end{equation}
	Another property of the Moreau envelope that plays a central role in this paper is that, if the 
	corresponding function $\omega$ is strongly convex then its Moreau envelope is also strongly 
	convex as recorded in the following result (for completeness, the proof given in Appendix
	\ref{A:ProofStrongMoreau}).
	\begin{proposition} \label{P:StrongConvexMoreauEnvelope}
		Let $\omega : \real^{n} \rightarrow \left(-\infty , \infty\right]$ be a strongly convex 
		function with strong convexity parameter $\sigma$ and let $s > 0$. Then, the Moreau envelope 
		$M_{s\omega}$ is strongly convex with parameter $\sigma/\left(1 + s\sigma\right)$.
	\end{proposition}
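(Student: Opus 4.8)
The plan is to verify directly the defining inequality of strong convexity for $M_{s\omega}$: I will show that for every $\bx , \by \in \real^{n}$ and every $t \in \left[0,1\right]$,
\[
	M_{s\omega}\left(t\bx + \left(1-t\right)\by\right) \leq t M_{s\omega}\left(\bx\right) + \left(1-t\right) M_{s\omega}\left(\by\right) - \frac{\sigma}{2\left(1 + s\sigma\right)}\, t\left(1-t\right)\norm{\bx - \by}^{2}.
\]
First I would fix $\bx$, $\by$ and $t$, and let $\bx^{\ast} = \prox_{s\omega}\left(\bx\right)$ and $\by^{\ast} = \prox_{s\omega}\left(\by\right)$ be the unique minimizers realizing $M_{s\omega}\left(\bx\right)$ and $M_{s\omega}\left(\by\right)$; these exist and are unique because $\bu \mapsto \omega\left(\bu\right) + \frac{1}{2s}\norm{\bu - \bx}^{2}$ is strongly convex. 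Since $M_{s\omega}\left(t\bx + \left(1-t\right)\by\right)$ is an infimum over all $\bu \in \real^{n}$, I would upper-bound it by plugging in the specific feasible point $\bu = t\bx^{\ast} + \left(1-t\right)\by^{\ast}$.

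Next I would estimate the two resulting terms separately. For the $\omega$-term I apply strong convexity of $\omega$ to bound $\omega\left(t\bx^{\ast} + \left(1-t\right)\by^{\ast}\right)$, and for the quadratic term I use the identity $\norm{t\bw + \left(1-t\right)\br}^{2} = t\norm{\bw}^{2} + \left(1-t\right)\norm{\br}^{2} - t\left(1-t\right)\norm{\bw - \br}^{2}$ applied with $\bw = \bx^{\ast} - \bx$ and $\br = \by^{\ast} - \by$ (note that $t\bw + \left(1-t\right)\br$ is exactly the argument of the quadratic). Collecting terms reconstructs $t M_{s\omega}\left(\bx\right) + \left(1-t\right) M_{s\omega}\left(\by\right)$ and leaves the nonpositive correction $-\frac{t\left(1-t\right)}{2}\left( \sigma\norm{\bp}^{2} + \frac{1}{s}\norm{\bp - \bq}^{2} \right)$, where I set $\bp = \bx^{\ast} - \by^{\ast}$ and $\bq = \bx - \by$ (so that $\bw - \br = \bp - \bq$).

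It then remains to show that $\sigma\norm{\bp}^{2} + \frac{1}{s}\norm{\bp - \bq}^{2} \geq \frac{\sigma}{1 + s\sigma}\norm{\bq}^{2}$ for every $\bp \in \real^{n}$. This is the step I would regard as the crux, although it amounts only to a one-line convex quadratic minimization: the left-hand side is a strongly convex function of $\bp$, minimized at $\bp = \bq/\left(1 + s\sigma\right)$, and substituting this value gives exactly $\frac{\sigma}{1 + s\sigma}\norm{\bq}^{2}$. Combining this lower bound with the estimate of the previous paragraph yields the claimed inequality, identifying the strong convexity parameter as $\sigma/\left(1 + s\sigma\right)$.

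I expect the only genuine choice in the argument to be the selection of the feasible point $\bu = t\bx^{\ast} + \left(1-t\right)\by^{\ast}$; once that is made, the remainder is bookkeeping together with the scalar minimization above. As an alternative route, one could instead pass to Fenchel conjugates and use that $M_{s\omega}$ is the infimal convolution of $\omega$ with $\frac{1}{2s}\norm{\cdot}^{2}$, so that $M_{s\omega}^{\ast} = \omega^{\ast} + \frac{s}{2}\norm{\cdot}^{2}$; since $\sigma$-strong convexity of $\omega$ is equivalent to $\left(1/\sigma\right)$-smoothness of $\omega^{\ast}$, the conjugate $M_{s\omega}^{\ast}$ is $\left(1/\sigma + s\right)$-smooth, and dualizing back recovers strong convexity of $M_{s\omega}$ with parameter $\left(1/\sigma + s\right)^{-1} = \sigma/\left(1 + s\sigma\right)$. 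I would favor the direct approach above, since it relies only on the definition of the Moreau envelope and elementary identities already available in the paper.
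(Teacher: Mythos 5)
Your proof is correct, but your primary route is genuinely different from both proofs given in the paper. The paper's first proof is the conjugate argument you sketch only as an alternative: write $M_{s\omega}$ as the infimal convolution $\omega \Box \tfrac{1}{2s}\norm{\cdot}^{2}$, pass to $M_{s\omega}^{\ast} = \omega^{\ast} + \tfrac{s}{2}\norm{\cdot}^{2}$, and invoke the equivalence between $\sigma$-strong convexity and $(1/\sigma)$-smoothness of the conjugate. The paper's second proof first establishes that $\prox_{s\omega}$ is a $1/(1+s\sigma)$-contraction and then combines the gradient formula $\nabla M_{s\omega}(\bx) = \tfrac{1}{s}(\bx - \prox_{s\omega}(\bx))$ with Cauchy--Schwarz to obtain strong monotonicity of $\nabla M_{s\omega}$. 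Your direct verification of the secant inequality, by testing the infimum at $\bu = t\bx^{\ast} + (1-t)\by^{\ast}$ and then minimizing the quadratic $\bp \mapsto \sigma\norm{\bp}^{2} + \tfrac{1}{s}\norm{\bp - \bq}^{2}$, checks out (the three-point identity, the minimizer $\bp = \bq/(1+s\sigma)$, and the resulting constant $\sigma/(1+s\sigma)$ are all correct), and it is the most self-contained of the three: it needs neither conjugate duality nor the differentiability of the Moreau envelope. What it does not buy you is the contraction constant of $\prox_{s\omega}$ itself, which the paper's second proof produces as a by-product (Lemma on the prox of a strongly convex function) and which is used later in the paper to set $\beta = 1/(1+s\sigma)$ in the nonsmooth setting; if you went your route you would still need that lemma separately. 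One small point worth a sentence in a polished write-up: for the minimizers $\bx^{\ast}, \by^{\ast}$ to exist you implicitly use that $\bu \mapsto \omega(\bu) + \tfrac{1}{2s}\norm{\bu - \bx}^{2}$ attains its infimum, which requires $\omega$ proper and lower semicontinuous in addition to strongly convex; the paper's proofs carry the same implicit hypothesis, so this is not a gap relative to the paper.
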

	See Section \ref{SSec:SAMNonsmooth} for more details on the Moreau envelope relevant for our 
	discussion.
\medskip
	
	A mapping $S : \real^{n} \rightarrow \real^{n}$ is said to be \emph{$\beta$-contraction} if there 
	exists $\beta < 1$ such that
	\begin{equation*}
		\norm{S\left(\bx\right) - S\left(\by\right)} \leq \beta\norm{\bx - \by}, \quad \forall \,\, 
		\bx , \by \in \real^{n}.
	\end{equation*}
	For functions $\omega$ which satisfy Assumption \ref{A:Omega} we have the following result which 
	is crucial for our derivations. Although this result seem to be classic, we did not find an exact 
	reference for its proof and therefore, for the sake of completeness, we provide a proof in 
	Appendix \ref{A:ProofGradContraction}.
	\begin{proposition} \label{P:GradContraction}
		Suppose that Assumption \ref{A:Omega} holds. Then, the mapping defined by $S_{s} = I - s\nabla 
		\omega$, where $I$ is the identity operator, is a contraction for all $s \leq 2/
		\left(L_{\omega} + \sigma\right)$, that is,
		\begin{equation}
			\norm{\bx - s\nabla \omega\left(\bx\right) - \left(\by - s\nabla \omega\left(\by\right)
			\right)} \leq \sqrt{1 - \frac{2s\sigma L_{\omega}}{\sigma + L_{\omega}}}\norm{\bx - \by},
			\quad \forall \,\, \bx , \by \in \real^{n}.
		\end{equation}
	\end{proposition}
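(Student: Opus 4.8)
The plan is to reduce the claim to a single scalar inequality obtained by expanding the squared norm, and to supply the one nontrivial ingredient---a combined strong-monotonicity and co-coercivity estimate for $\nabla\omega$. First I would introduce the shorthand $\bu := \bx - \by$ and $\bv := \nabla\omega\left(\bx\right) - \nabla\omega\left(\by\right)$, so that the quantity to be controlled is
\begin{equation*}
	\norm{\bu - s\bv}^{2} = \norm{\bu}^{2} - 2s\act{\bv , \bu} + s^{2}\norm{\bv}^{2}.
\end{equation*}
Everything then hinges on a lower bound for the cross term $\act{\bv , \bu}$ that simultaneously controls $\norm{\bu}^{2}$ and $\norm{\bv}^{2}$. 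Concretely, the key estimate I would establish is
\begin{equation*}
	\act{\nabla\omega\left(\bx\right) - \nabla\omega\left(\by\right) , \bx - \by} \geq \frac{\sigma L_{\omega}}{\sigma + L_{\omega}}\norm{\bx - \by}^{2} + \frac{1}{\sigma + L_{\omega}}\norm{\nabla\omega\left(\bx\right) - \nabla\omega\left(\by\right)}^{2},
\end{equation*}
valid for all $\bx , \by \in \real^{n}$.

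To prove this combined inequality---which I expect to be the main obstacle, since the remainder is purely algebraic bookkeeping---I would pass to the shifted function $\psi := \omega - \left(\sigma / 2\right)\norm{\cdot}^{2}$. Assumption \ref{A:Omega}(i) guarantees that $\psi$ is convex, and since $\nabla\psi = \nabla\omega - \sigma I$, Assumption \ref{A:Omega}(ii) guarantees that $\nabla\psi$ is Lipschitz with constant $L_{\omega} - \sigma$. For a convex differentiable function with $\left(L_{\omega} - \sigma\right)$-Lipschitz gradient, the Baillon--Haddad co-coercivity property yields
\begin{equation*}
	\act{\nabla\psi\left(\bx\right) - \nabla\psi\left(\by\right) , \bx - \by} \geq \frac{1}{L_{\omega} - \sigma}\norm{\nabla\psi\left(\bx\right) - \nabla\psi\left(\by\right)}^{2}.
\end{equation*}
Substituting $\nabla\psi\left(\bx\right) - \nabla\psi\left(\by\right) = \bv - \sigma\bu$, expanding both sides, and collecting the $\act{\bv , \bu}$ terms (after multiplying through by $L_{\omega} - \sigma > 0$) reproduces exactly the displayed combined bound. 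The borderline case $L_{\omega} = \sigma$ forces $\omega$ to be a quadratic and can be treated directly, so no generality is lost.

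Finally I would insert the combined inequality into the expansion above to obtain
\begin{equation*}
	\norm{\bu - s\bv}^{2} \leq \left(1 - \frac{2s\sigma L_{\omega}}{\sigma + L_{\omega}}\right)\norm{\bu}^{2} + s\left(s - \frac{2}{\sigma + L_{\omega}}\right)\norm{\bv}^{2}.
\end{equation*}
For every $s$ with $0 < s \leq 2 / \left(L_{\omega} + \sigma\right)$ the coefficient $s\left(s - 2/\left(\sigma + L_{\omega}\right)\right)$ is nonpositive, so the second summand may be discarded. Recalling that $\norm{\bu - s\bv} = \norm{\bx - s\nabla\omega\left(\bx\right) - \left(\by - s\nabla\omega\left(\by\right)\right)}$ and taking square roots then delivers the asserted contraction factor $\sqrt{1 - 2s\sigma L_{\omega} / \left(\sigma + L_{\omega}\right)}$, which completes the argument.
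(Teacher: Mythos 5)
Your argument is correct and follows essentially the same route as the paper: expand $\norm{\bu - s\bv}^{2}$, invoke the combined strong-monotonicity/co-coercivity bound $\act{\bv,\bu} \geq \frac{\sigma L_{\omega}}{\sigma+L_{\omega}}\norm{\bu}^{2} + \frac{1}{\sigma+L_{\omega}}\norm{\bv}^{2}$, and discard the nonpositive $\norm{\bv}^{2}$ term when $s \leq 2/(\sigma+L_{\omega})$. The only difference is that the paper cites this key inequality directly (Nesterov, Theorem 2.1.12), whereas you re-derive it via the shift $\psi = \omega - (\sigma/2)\norm{\cdot}^{2}$ and Baillon--Haddad, which is the standard proof of that cited result.
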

	We now conclude this section by giving a short overview on the MNG method developed in
	\cite{BS2014}. 
 
\subsection{The Minimal Norm Gradient Method} \label{SSec:MNG}
	The MNG method of \cite{BS2014} was designed to tackle bi-level optimization problems for which $g
	\left(\cdot\right) := \delta_{X}\left(\cdot\right)$. In this case $\varphi\left(\bx\right) = f
	\left(\bx\right)$, for all $\bx \in X$. 
\medskip

	Each iteration of the MNG method consists of three main computational tasks.
	\begin{itemize}
		\item[$\rm{(i)}$] Computing the proj-grad mapping $T_{t}$ in order to construct the first 
			half-space. 
		\item[$\rm{(ii)}$] Computing the gradient of $\omega$, which is needed to construct the 
			second half-space.
		\item[$\rm{(iii)}$] Minimizing $\omega$ over the intersection of these two half spaces.
	\end{itemize}
	The first two tasks are standard in first order methods and consist of computing gradient and 
	projections. On the other hand, the third task (which depends on $\omega$) is more involved and 
	might requires a nested optimization algorithm . Thus, in many scenarios, we end up with 
	\emph{nested} schemes which implies that: (i) there is accumulation of computational error in each 
	step, and (ii) the stopping criteria of the nested algorithm at each iteration is not well-
	defined. Therefore, the third task determines the computational complexity of the entire method, 
	and thus the applicability of the MNG method for certain implementation.
\medskip

	In the case where $\omega\left(\cdot\right) := \norm{\cdot}_\bbq^{2}$, where $\bbq$ is a positive 
	definite matrix, the computation is easy and given by an explicit formula as noted in 
	\cite[Example 1, Page 36]{BS2014}, although it may require some decomposition and inversion of 
	matrix $\bbq$ (see Section \ref{Sec:Numerical Experiments} for more details).
\medskip

	The main result derived in \cite{BS2014} (\cf \cite[Theorems 4.1 and 4.2, Pages 37 and 39]{BS2014}) is valid when Assumptions \ref{A:Composite} and \ref{A:Omega} hold and when $g\left(\cdot
	\right) := \delta_{X}\left(\cdot\right)$. As we already mentioned, in \cite{BS2014} the authors 
	did not assume that $\nabla \omega$ is Lipschitz continuous, only continuously differentiable. We 
	state here the following result which deals with the case where the Lipschitz constant $L_{f}$ of 
	$\nabla f$ is known (for a backtracking version see \cite{BS2014}).
	\begin{proposition} \label{P:MNG} 
		Let $\Seq{\bx}{k}$ be the sequence generated by the MNG method. Then, the sequence $\Seq{\bx}	
		{k}$ converges to the optimal solution $\bx_{mn}^{\ast}$ of problem \eqref{Prob:MNP} and, for 
		any $k \in \nn$, we have that
		\begin{equation*}
			\min_{1 \leq l \leq k} \varphi\left(T_{1/L_{f}}\left(\bx^{l}\right)\right) - \varphi
			\left(\bx_{mn}^{\ast}\right) \leq \frac{\rho L_{f}\norm{\bx^{0} - \bx_{mn}^{\ast}}^{2}}
			{\sqrt{k}},
		\end{equation*}
		where $\rho = 1$ if $X = \real^{n}$ and $\rho = 4/3$ otherwise.
	\end{proposition}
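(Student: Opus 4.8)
The plan is to exploit the cutting-plane structure underlying the MNG method, in which the current iterate $\bx^{k}$ is used to build an inner half-space $Q_{k}$ from the prox-grad step $T_{1/L_{f}}(\bx^{k})$ and an outer half-space $W_{k}$ from $\nabla\omega(\bx^{k})$, after which one sets $\bx^{k+1}=\argmin_{\bx\in Q_{k}\cap W_{k}}\omega(\bx)$. First I would record the half-spaces explicitly. Writing $\bx_{k}^{+}=T_{1/L_{f}}(\bx^{k})$ and $\bv^{k}=\bx^{k}-\bx_{k}^{+}$, the operator $T_{1/L_{f}}$ is averaged---firmly nonexpansive when $g\equiv 0$ (so that $X=\real^{n}$), and $2/3$-averaged in general as a composition of two firmly nonexpansive maps (\cf \cite[Proposition 12.27]{BC2011-B})---so every $\bu\in\Fix(T_{1/L_{f}})=X^{\ast}$ obeys $\act{\bv^{k},\bx^{k}-\bu}\ge c\norm{\bv^{k}}^{2}$, where $c=1$ in the first case and $c=3/4$ in the second. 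Taking $Q_{k}=\paren{\bx\in\real^{n}:\act{\bv^{k},\bx^{k}-\bx}\ge c\norm{\bv^{k}}^{2}}$ we thus have $X^{\ast}\subseteq Q_{k}$, and in particular $\bx_{mn}^{\ast}\in Q_{k}$. With $W_{k}=\paren{\bx\in\real^{n}:\act{\nabla\omega(\bx^{k}),\bx-\bx^{k}}\ge 0}$ I would show by induction (taking $W_{0}=\real^{n}$, so the base case is trivial) that $\bx_{mn}^{\ast}\in W_{k}$: since $\bx^{k}$ minimizes $\omega$ over $Q_{k-1}\cap W_{k-1}$ and $\bx_{mn}^{\ast}$ lies in that set by the inductive hypothesis together with $\bx_{mn}^{\ast}\in Q_{k-1}$, optimality gives $\act{\nabla\omega(\bx^{k}),\bx_{mn}^{\ast}-\bx^{k}}\ge 0$. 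Hence $\bx_{mn}^{\ast}\in Q_{k}\cap W_{k}$ for all $k$.

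Two consequences of the minimization step drive the analysis. First, since $\bx^{k+1}\in Q_{k}$ we have $\act{\bv^{k},\bx^{k}-\bx^{k+1}}\ge c\norm{\bv^{k}}^{2}$, so by Cauchy--Schwarz $\norm{\bv^{k}}\le(1/c)\norm{\bx^{k+1}-\bx^{k}}$ with $1/c=\rho\in\paren{1,4/3}$; this single inequality is the origin of the two values of $\rho$. Second, I would establish the Fej\'er-type distance-decrease $\norm{\bx^{k+1}-\bx_{mn}^{\ast}}^{2}\le\norm{\bx^{k}-\bx_{mn}^{\ast}}^{2}-\norm{\bx^{k+1}-\bx^{k}}^{2}$, equivalently the obtuse-angle inequality $\act{\bx^{k}-\bx^{k+1},\bx_{mn}^{\ast}-\bx^{k+1}}\le 0$ for the minimizer $\bx^{k+1}$ over the convex set $Q_{k}\cap W_{k}\ni\bx_{mn}^{\ast}$. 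Summing the latter and discarding the nonnegative terminal term shows that $\norm{\bx^{k}-\bx_{mn}^{\ast}}$ is nonincreasing (hence bounded by $\norm{\bx^{0}-\bx_{mn}^{\ast}}$) and that $\sum_{l=0}^{k-1}\norm{\bx^{l+1}-\bx^{l}}^{2}\le\norm{\bx^{0}-\bx_{mn}^{\ast}}^{2}$. Thus $\norm{\bx^{k+1}-\bx^{k}}\to 0$, and by the first inequality the residuals $\bv^{k}\to 0$, placing every cluster point in $X^{\ast}=\Fix(T_{1/L_{f}})$; strong convexity of $\omega$ (Assumption~\ref{A:Omega}) then forces the unique limit to be $\bx_{mn}^{\ast}$, giving the asserted convergence.

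It remains to turn the increments into a bound on the inner objective values. Proposition~\ref{P:ProximalInequality} at $\bu=\bx_{mn}^{\ast}$ with $t=1/L_{f}$ gives, for $\bx_{l}^{+}=T_{1/L_{f}}(\bx^{l})$,
\[\varphi(\bx_{l}^{+})-\varphi(\bx_{mn}^{\ast})\le L_{f}\act{\bv^{l},\bx^{l}-\bx_{mn}^{\ast}}-\tfrac{L_{f}}{2}\norm{\bv^{l}}^{2}\le L_{f}\norm{\bv^{l}}\,\norm{\bx^{l}-\bx_{mn}^{\ast}}\le \rho L_{f}\norm{\bx^{l+1}-\bx^{l}}\,\norm{\bx^{0}-\bx_{mn}^{\ast}},\]
using Cauchy--Schwarz after discarding the negative term, then $\norm{\bv^{l}}\le\rho\norm{\bx^{l+1}-\bx^{l}}$ and $\norm{\bx^{l}-\bx_{mn}^{\ast}}\le\norm{\bx^{0}-\bx_{mn}^{\ast}}$. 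Taking the minimum over $1\le l\le k$ and invoking $\min_{1\le l\le k}\norm{\bx^{l+1}-\bx^{l}}^{2}\le\tfrac{1}{k}\sum_{l=1}^{k}\norm{\bx^{l+1}-\bx^{l}}^{2}\le\tfrac{1}{k}\norm{\bx^{0}-\bx_{mn}^{\ast}}^{2}$ then yields exactly $\frac{\rho L_{f}\norm{\bx^{0}-\bx_{mn}^{\ast}}^{2}}{\sqrt{k}}$. The hard part will be the distance-decrease itself: the obtuse-angle inequality is immediate when the outer objective is the squared Euclidean distance, but for a general strongly convex $\omega$ the point $\bx^{k+1}$ is an $\omega$-minimizer rather than a Euclidean projection, so obtaining the Euclidean Fej\'er inequality---and with it the clean dependence on $\norm{\bx^{0}-\bx_{mn}^{\ast}}$ alone, free of $\sigma$ and $L_{\omega}$---hinges on the particular geometry of the two half-spaces, which is the technical core carried out in \cite{BS2014}.
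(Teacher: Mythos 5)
First, note that the paper does not actually prove this proposition: it is imported verbatim from \cite{BS2014} (Theorems 4.1 and 4.2 there), so your reconstruction can only be judged against that source. Your skeleton is the right one and matches the architecture of the cited proof: the separation $X^{\ast}\subseteq Q_{k}$ via the averagedness of $T_{1/L_{f}}$ (firmly nonexpansive, i.e.\ $c=1$, when $g\equiv 0$; $2/3$-averaged, i.e.\ $c=3/4$, in general, which is exactly where $\rho\in\{1,4/3\}$ comes from), the induction showing $\bx_{mn}^{\ast}\in Q_{k}\cap W_{k}$, the use of Proposition \ref{P:ProximalInequality} with Cauchy--Schwarz to convert the function gap into $\norm{\bv^{l}}\cdot\norm{\bx^{l}-\bx_{mn}^{\ast}}$, and the ``$\min\leq$ average'' step that produces the $1/\sqrt{k}$ rate. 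All of those steps check out.

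The genuine gap is the one you flag yourself, and it is not a removable technicality: the Euclidean Fej\'er inequality $\norm{\bx^{k+1}-\bx_{mn}^{\ast}}^{2}\leq\norm{\bx^{k}-\bx_{mn}^{\ast}}^{2}-\norm{\bx^{k+1}-\bx^{k}}^{2}$ requires the obtuse-angle property of the Euclidean projection, and $\bx^{k+1}$ is the $\omega$-minimizer over $Q_{k}\cap W_{k}$, not that projection; for a general strongly convex $\omega$ this inequality simply need not hold, so the argument cannot be closed along the route you sketch. The actual mechanism in \cite{BS2014} telescopes in $\omega$-values rather than in Euclidean distances: since $\bx^{k+1}\in W_{k}$, strong convexity gives $\omega(\bx^{k+1})\geq\omega(\bx^{k})+\tfrac{\sigma}{2}\norm{\bx^{k+1}-\bx^{k}}^{2}$, while minimality of $\bx^{k+1}$ over a convex set containing $\bx_{mn}^{\ast}$ gives $\omega(\bx^{k+1})+\tfrac{\sigma}{2}\norm{\bx^{k+1}-\bx_{mn}^{\ast}}^{2}\leq\omega(\bx_{mn}^{\ast})$; summing the first and combining with the second yields both the uniform bound on $\norm{\bx^{l}-\bx_{mn}^{\ast}}$ and $\sum_{l}\norm{\bx^{l+1}-\bx^{l}}^{2}\leq\tfrac{2}{\sigma}\left(\omega(\bx_{mn}^{\ast})-\omega(\bx^{0})\right)$. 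The clean constant $\norm{\bx^{0}-\bx_{mn}^{\ast}}^{2}$, free of $\sigma$ and $L_{\omega}$, is then a consequence of the MNG initialization $\bx^{0}=\argmin_{\bx\in\real^{n}}\omega(\bx)$ (recalled in Section \ref{Sec:Numerical Experiments}) together with the specific quadratic growth of the $\omega$ considered in \cite{BS2014}; it is not a Euclidean monotonicity property of the iterates. So your proposal is correct in outline but leaves precisely the load-bearing estimate unproved, and the fix changes the shape of that part of the argument rather than merely filling in a computation.
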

	It should be noted that the MNG method is not a feasible method in the sense that $\bx^{k}$, $k 
	\in \nn$, does not necessarily belongs to the constraint set $X$ and therefore the rate of 
	convergence result is obtained on the feasible sequence $\left\{ T_{1/L_{f}}\left(\bx^{k}\right) 
	\right\}_{k \in \nn}$, $k \in \nn$. Furthermore, though in the original paper the authors only 
	discuss the case where $g\left(\cdot\right) := \delta_{X}\left(\cdot\right)$ the result can 
	actually be extended to the more general case given in the introduction.
\medskip

	Our main goal in this paper is to study a different algorithm for solving bi-level optimization 
	problems, than the MNG method, for which \textit{we prove a rate of convergence} in terms of the 
	inner objective function values that is superior to the rate of the MNG method (given in 
	Proposition \ref{P:MNG} above). In addition to complexity aspect, the studied method, which is 
	discussed in the next section, \textit{is simpler and capable of tackling bi-level problems for 
	which the outer	objective function is not necessarily smooth}. This attractiveness is mainly due 
	to the fact that the studied method does not require the minimization of the outer objective 
	function $\omega$ over two half-spaces as needed in the MNG method. 
 
\section{The Sequential Averaging Method} \label{Sec:SAM}

\subsection{The General Framework} \label{SSec:XuMethod}
	Our approach here is based on taking an existing algorithm, that we call Sequential Averaging 
	Method (SAM), which was developed in \cite{Xu2004} for solving a certain class of fixed-point 
	problems (see precise details below), and determining how it can be used in the setting of bi-
	level optimization problems as described in Section \ref{Sec:AlgFrameMathTools}. It should be 
	noted that in \cite{Xu2004} it is already proved that SAM generates a sequence which converges to 
	a solution of the corresponding fixed-point problem. Now we will discuss in detail the method 
	proposed in \cite{Xu2004}. 
\medskip

	The problem of main interest in \cite{Xu2004} is finding a fixed-point of the nonexpansive mapping
	$T$, that is, $\bx^{\ast} \in \Fix(T)$, which also satisfies certain property with respect to a
	contraction mapping $S$ over all points which belong to $\Fix(T)$. This property is formulated 
	using the following variational equality
	\begin{equation} \label{VariationalInequality}
		\act{\bx^{\ast} - S\left(\bx^{\ast}\right) , \bx - \bx^{\ast}} \geq 0, \quad \forall \,\, \bx
		\in \Fix(T).
	\end{equation}
	This means that the problem here is to find a fixed-point of the mapping $T$ which is ``better" 
	than all other fixed-points of $T$ in the sense of inequality \eqref{VariationalInequality}.
\medskip
	
	The SAM iteratively generates a sequence $\Seq{\bx}{k}$ starting from any $\bx^{0} \in \real^{n}$ 
	by averaging the two mappings $S$ and $T$ in the following way
	\begin{equation*}
		\bx^{k} = \alpha_{k}S\left(\bx^{k - 1}\right) + \left(1 - \alpha_{k}\right)T\left(\bx^{k - 1}
		\right),
	\end{equation*}
	where $\seq{\alpha}{k}$ is a well-chosen sequence of real numbers from $\left(0 , 1\right]$ which 
	satisfies the following assumption.
	\begin{assumption} \label{A:AlphSeq}
		Let $\seq{\alpha}{k}$ be a sequence of real numbers in $\left(0 , 1\right]$ which satisfies 
		$\lim_{k \rightarrow \infty} \alpha_{k} = 0$, $\sum_{k = 1}^{\infty} \alpha_{k} = \infty$ and 
		$\lim_{k \rightarrow \infty} \alpha_{k + 1}/\alpha_{k} = 1$.		 
	\end{assumption}
	It should be noted that Assumption \ref{A:AlphSeq} holds true for several choices of sequences 
	$\seq{\alpha}{k}$ which include, for example, $\alpha_{k} = \alpha/k$, $k \in \nn$ for any choice 
	of $\alpha \in \left(0 , 1\right]$. 
\medskip

	The following lemma summarizes the main known results on SAM, which were proved in
	\cite[Theorem 3.2]{Xu2004} and formed the basis for this paper.
	\begin{lemma} \label{L:Xu}	
		Assume that $S : \real^{n} \rightarrow \real^{n}$ is a $\beta$-contraction and that $T :
		\real^{n} \rightarrow \real^{n}$ is nonexpansive mapping, for which $\Fix(T) \neq \emptyset$. 
		Let $\Seq{\bx}{k}$ be the sequence generated by SAM. If Assumption \ref{A:AlphSeq} holds true, 
		then the following assertions are valid.
		\begin{itemize}
			\item[$\rm{(i)}$] The sequence $\Seq{\bx}{k}$ is bounded, in particular, for any 
				$\tilde{\bx} \in \Fix(T)$ we have, for all $k \in \nn$, that
				\begin{equation} \label{D:Cx}
					\norm{\bx^{k} - \tilde{\bx}} \leq C_{\tilde{\bx}} := \max \left\{ \norm{\bx^{0} -
					\tilde{\bx}} , \frac{1}{1 - \beta}\norm{\left(I - S\right)\tilde{\bx}}\right\}.
				\end{equation}	
				Moreover, for all $k \in \nn$, we also have that 
				\begin{equation*}
					\norm{T\left(\bx^{k}\right) - \tilde{\bx}} \leq C_{\tilde{\bx}} \quad \text{and} 
					\quad \norm{S\left(\bx^{k}\right) - S\left(\tilde{\bx}\right)} \leq \beta 
					C_{\tilde{\bx}}.
				\end{equation*}
			\item[$\rm{(ii)}$] The sequence $\Seq{\bx}{k}$ converges to some $\bx^{\ast} \in \Fix(T)$.
			\item[$\rm{(iii)}$] The limit point $\bx^{\ast}$ of $\Seq{\bx}{k}$, which the existence is 
				ensured by (ii), satisfies the following variational inequality
				\begin{equation} \label{L:Xu:VariationalInequality}
					\act{\bx^* - S\left(\bx^{\ast}\right) , \bx - \bx^{\ast}} \geq 0, \quad \forall \,
					\, \bx \in \Fix(T).
				\end{equation}
		\end{itemize} 
	\end{lemma}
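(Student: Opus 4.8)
The plan is to treat the three assertions in turn, using only that $S$ is a $\beta$-contraction, $T$ is nonexpansive with $\Fix(T)\neq\emptyset$, and Assumption \ref{A:AlphSeq}. The boundedness in (i) is the easy part and I would prove it by induction. Fixing $\tilde{\bx}\in\Fix(T)$ and writing $\tilde{\bx}=\alpha_k\tilde{\bx}+(1-\alpha_k)\tilde{\bx}$, the identity $T(\tilde{\bx})=\tilde{\bx}$, nonexpansiveness of $T$, and the splitting $\norm{S(\bx^{k-1})-\tilde{\bx}}\le\beta\norm{\bx^{k-1}-\tilde{\bx}}+\norm{(I-S)\tilde{\bx}}$ give
\[
\norm{\bx^{k}-\tilde{\bx}}\le\left(1-\alpha_k(1-\beta)\right)\norm{\bx^{k-1}-\tilde{\bx}}+\alpha_k\norm{(I-S)\tilde{\bx}}.
\]
Since $\norm{(I-S)\tilde{\bx}}\le(1-\beta)C_{\tilde{\bx}}$ by the definition of $C_{\tilde{\bx}}$, the right-hand side is a convex combination of $\norm{\bx^{k-1}-\tilde{\bx}}$ and $C_{\tilde{\bx}}$, so the induction hypothesis $\norm{\bx^{k-1}-\tilde{\bx}}\le C_{\tilde{\bx}}$ propagates, the base case being immediate. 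The two remaining bounds are then read off from nonexpansiveness of $T$ and the contraction property of $S$.

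For (iii) I would first define $\bx^{\ast}$ directly as the solution of the variational inequality and settle existence and uniqueness before proving convergence. The set $\Fix(T)$ is closed and convex (a standard fact for nonexpansive maps on $\real^n$), and $I-S$ is strongly monotone, since $\act{(I-S)\bx-(I-S)\by,\bx-\by}\ge(1-\beta)\norm{\bx-\by}^2$. Hence the variational inequality \eqref{L:Xu:VariationalInequality} over $\Fix(T)$ admits a unique solution $\bx^{\ast}$, which already establishes (iii) for this point; it then remains to show in (ii) that the SAM sequence converges to precisely this $\bx^{\ast}$.

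The heart of the argument, and its main obstacle, is (ii), which I would split into three steps. The first is asymptotic regularity, $\norm{\bx^{k}-T(\bx^{k})}\to 0$. Regrouping the update gives
\[
\bx^{k+1}-\bx^{k}=\alpha_{k+1}\left(S(\bx^{k})-S(\bx^{k-1})\right)+(1-\alpha_{k+1})\left(T(\bx^{k})-T(\bx^{k-1})\right)+(\alpha_{k+1}-\alpha_k)\left(S(\bx^{k-1})-T(\bx^{k-1})\right),
\]
whence $\norm{\bx^{k+1}-\bx^{k}}\le\left(1-\alpha_{k+1}(1-\beta)\right)\norm{\bx^{k}-\bx^{k-1}}+|\alpha_{k+1}-\alpha_k|\,M$, where $M$ uniformly bounds $\norm{S(\bx^{k-1})-T(\bx^{k-1})}$ by part (i). This is exactly where Assumption \ref{A:AlphSeq} is indispensable: dividing the error by $\alpha_{k+1}(1-\beta)$ leaves a factor comparable to $|1-\alpha_k/\alpha_{k+1}|\to 0$, so a standard lemma on recursive nonnegative sequences (as in \cite{Xu2004}) forces $\norm{\bx^{k+1}-\bx^{k}}\to 0$; together with $\norm{\bx^{k+1}-T(\bx^{k})}=\alpha_{k+1}\norm{S(\bx^{k})-T(\bx^{k})}\to 0$ this yields asymptotic regularity.

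The second step is the estimate $\limsup_{k\to\infty}\act{S(\bx^{\ast})-\bx^{\ast},\bx^{k}-\bx^{\ast}}\le 0$. Passing to a subsequence attaining the $\limsup$ and, by boundedness, a further subsequence $\bx^{k_j}\to\bar{\bx}$, asymptotic regularity and continuity of $T$ force $\bar{\bx}\in\Fix(T)$, so the variational inequality defining $\bx^{\ast}$ makes the limit $\act{S(\bx^{\ast})-\bx^{\ast},\bar{\bx}-\bx^{\ast}}$ nonpositive. For the third step I would expand $\norm{\bx^{k}-\bx^{\ast}}^2$, bound the contraction/nonexpansive part by $\left(1-\alpha_k(1-\beta)\right)\norm{\bx^{k-1}-\bx^{\ast}}$ using Cauchy--Schwarz and Young's inequality, and collect the cross term $2\act{S(\bx^{\ast})-\bx^{\ast},\bx^{k}-\bx^{\ast}}$ (with a vanishing $O(\alpha_k)$ remainder) into a quantity $\delta_k$, obtaining $\norm{\bx^{k}-\bx^{\ast}}^2\le\left(1-\alpha_k(1-\beta)\right)\norm{\bx^{k-1}-\bx^{\ast}}^2+\alpha_k(1-\beta)\delta_k$. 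Since $\sum_k\alpha_k=\infty$ and $\limsup_k\delta_k\le 0$ by the second step, the same recursive-sequence lemma gives $\norm{\bx^{k}-\bx^{\ast}}^2\to 0$, i.e.\ $\bx^{k}\to\bx^{\ast}\in\Fix(T)$, proving (ii) and completing (iii). The delicate points throughout are the careful regrouping behind asymptotic regularity (and the precise role of $\alpha_{k+1}/\alpha_k\to 1$) and the sign bookkeeping in the final quadratic recursion.
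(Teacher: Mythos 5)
Your proposal is correct, but note that the paper does not actually prove Lemma \ref{L:Xu}: it is stated as a known result and attributed to \cite[Theorem 3.2]{Xu2004}, so there is no in-paper argument to compare against. What you have written is a sound, self-contained reconstruction of the standard viscosity/Halpern-type argument from that reference, specialized to $\real^{n}$ (which lets you replace the weak-cluster-point and demiclosedness machinery of the Hilbert-space proof by ordinary compactness and continuity of $T$). All three steps check out: the inductive convex-combination bound for (i); the existence and uniqueness of the solution of \eqref{L:Xu:VariationalInequality} via strong monotonicity of $I-S$ over the closed convex set $\Fix(T)$; the asymptotic-regularity recursion, where $\lim_{k}\alpha_{k+1}/\alpha_{k}=1$ is exactly what makes $|\alpha_{k+1}-\alpha_{k}|/\alpha_{k+1}\to 0$ and $\sum_{k}\alpha_{k}=\infty$ is what the recursive-sequence lemma needs; and the final quadratic recursion. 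One small polish: in the last step the identity $\norm{a+b}^{2}\le\norm{a}^{2}+2\act{b,a+b}$ applied with $b=\alpha_{k}\left(S\left(\bx^{\ast}\right)-\bx^{\ast}\right)$ gives the inequality $\norm{\bx^{k}-\bx^{\ast}}^{2}\le\left(1-\alpha_{k}(1-\beta)\right)\norm{\bx^{k-1}-\bx^{\ast}}^{2}+2\alpha_{k}\act{S\left(\bx^{\ast}\right)-\bx^{\ast},\bx^{k}-\bx^{\ast}}$ in one stroke, with no residual $O(\alpha_{k})$ term to track, which tightens the "sign bookkeeping" you flag as delicate.
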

	We conclude this part by highlighting and streamlining our contributions in this paper, which go 
	beyond convergence of SAM as recorded in Lemma \ref{L:Xu}.
	\begin{itemize}
		\item[$\rm{(i)}$] We prove that under a specific choice of parameters, SAM generates a 
			sequence $\Seq{\bx}{k}$ for which the sequence of the gaps between the iterator and its 
			mapping by $T$, that is $\left\{ \norm{T\left(\bx^{k}\right) - \bx^{k}} \right\}_{k \in 
			\nn}$, converges with the non-asymptotic rate of $O(1/k)$. \textit{This result gives a 
			rate of convergence to a fixed point of $T$ for the first time}.
		\item[$\rm{(ii)}$] We study BiG-SAM for solving bi-level optimization problems, for which the 
			functions $f$, $g$ and $\omega$ satisfy Assumptions \ref{A:Composite} and \ref{A:Omega}. 
			For this variant, we prove an $O(1/k)$ rate of convergence for the sequence of inner 
			objective function values (see details in Section \ref{Sec:ConvergenceAnalysis}). 
			\textit{This result affirmatively answers the question raised in \cite{BS2014} about a 
			first order method for bi-level problems with an improved rate of convergence.}
		\item[$\rm{(iii)}$] We show that BiG-SAM can be also applied in situations where the outer 
			objective function $\omega$ satisfies only Assumption \ref{A:Omega}(i) but not 
			\ref{A:Omega}(ii), \ie it is strongly convex but not necessarily smooth. In this case we 
			also prove a rate of convergence result in terms of the inner objective function values 
			(see details in Section \ref{SSec:SAMNonsmooth}).
	\end{itemize}
	
\subsection{SAM for Smooth Bi-level Optimization Problem} \label{SSec:SAMSmooth}
	We begin this part by connecting the fixed-point problem discussed above with the bi-level 
	optimization problem described in Section \ref{SSec:BiLevelOpt}. We will make this connection by 
	linking the mappings $S$ and $T$ with problems \eqref{Prob:MNP} and \eqref{Prob:P}, respectively. 
	We begin by connecting the mapping $T$ with problem \eqref{Prob:P}.
\medskip
	
	First of all, as explained in Section \ref{SSec:XuMethod}, the mapping $T$ and its fixed-point set 
	$\Fix(T)$ are the inner part in the fixed-point problem, in the sense that we want to find a 
	fixed-point of $T$ which is ``better" than any other points in $\Fix(T)$ with respect to a 
	criteria given by the mapping $S$ (see \eqref{VariationalInequality}). In the bi-level setting, 
	the situation is similar, since the inner problem \eqref{Prob:P} is actually the inner part and 
	from all its optimal solutions, \ie from the set $X^{\ast}$, we would like to find the one which 
	satisfies the additional criteria, being minimizer of $\omega$ over $X^{\ast}$. Therefore, the 
	following relations 	hold.
	\begin{itemize}
		\item[$\rm{(i)}$] The mapping $T$ and its fixed-point set $\Fix(T)$ are related to problem
			\eqref{Prob:P} with the composite function $\varphi = f + g$ and the optimal solution set 
			$X^{\ast}$.
		\item[$\rm{(ii)}$] The mapping $S$ is related to problem \eqref{Prob:MNP} and the objective 
			function $\omega$.
	\end{itemize}
	From now on, we set the mapping $T$ to be the prox-grad mapping defined in \eqref{ProxGradMap}, 
	that is, for some $t \in \left(0 , {1}/{L_{f}}\right]$ we have 
	\begin{equation} \label{TChoice}
		T\left(\bx\right) := T_{t}\left(\bx\right)  =\prox_{tg}\left(\bx - t\nabla f\left(\bx\right)
		\right).
	\end{equation}
	According to Lemma \ref{L:ProxGradProp} and since Assumption \ref{A:Composite} holds, we ensure 
	that in this case $T$ is nonexpansive and $\Fix(T) = X^{\ast}$. We therefore fill all the 
	requirements on the mapping $T$ in Lemma \ref{L:Xu}, and immediately obtain from Lemma 
	\ref{L:Xu}(ii) that the sequence generated by SAM (with any $\beta$-contraction $S$) converges to 
	a point in $X^{\ast}$. Thus, the only remaining part is to connect problem \eqref{Prob:MNP} with 
	the criteria given in the variational inequality presented in Lemma \ref{L:Xu}(iii).
\medskip

	Taking into account Proposition \ref{P:GradContraction} and given that Assumption \ref{A:Omega} 
	holds, a natural choice for the mapping $S$ is as follows
	\begin{equation} \label{SChoice}
		S\left(\bx\right) := \bx - s\nabla \omega\left(\bx\right),
	\end{equation}
	where $s \in \left(0 , 2/\left(\sigma + L_{\omega}\right)\right]$. In this case we know, from 
	Proposition \ref{P:GradContraction}, that $S$ is a $\beta$-contraction with $\beta = \left({1 - 
	{2sL_{\omega}\sigma}/{\left(L_{\omega} + \sigma\right)}}\right)^{1/2}$.
\medskip

	Therefore, SAM for solving the bi-level optimization problems \eqref{Prob:P} and \eqref{Prob:MNP} 
	is given now.
\vspace{0.2in}

	{\center\fbox{\parbox{16cm}{{\bf \underline{Bi-level Gradient SAM (BiG-SAM)}}
		\begin{itemize}
       		\item[$\rm{(1)}$] {\bf Input:} $t \in \left(0 , 1/L_{f}\right]$, $s \in \left(0 , 2/
       			\left(L_{\omega} + \sigma\right)\right]$, and $\seq{\alpha}{k}$ satisfying Assumption 
       			\ref{A:AlphSeq}.
            \item[$\rm{(2)}$] {\bf Initialization}: $\bx^{0} \in \real^{n}$.
            \item[$\rm{(3)}$] {\bf General Step} ($k = 1 , 2 , \ldots$):
            		\begin{align}
			 		\by^{k} &= \prox_{tg}\left(\bx^{k - 1} - t\nabla f\left(\bx^{k - 1}\right)
			 		\right),\\
			 		\bz^{k} &= \bx^{k - 1} - s\nabla \omega\left(\bx^{k - 1}\right),
			 		\label{OmegaGradStep} \\		
					\bx^{k} &= \alpha_{k}\bz^{k} + \left(1 - \alpha_{k}\right)\by^{k}.
				\end{align}
        \end{itemize}\vspace{-0.2in}}}}

\vspace{0.2in}

	To conclude this section we would like to interpret the variational inequality given in Lemma 
	\ref{L:Xu}(iii) in the setting of bi-level optimization, in which $S$ and $T$ are given by 
	\eqref{SChoice} and \eqref{TChoice}, respectively, for some $t \in \left(0 , 1/L_{f}\right]$ and 
	$s \in \left(0 , 2/\left(L_{\omega} + \sigma\right)\right]$. In the following result we give the 
	desired interpretation and prove that BiG-SAM generates a sequence which converges to the solution 
	of problem \eqref{Prob:MNP}.
	\begin{proposition} \label{P:ConvergenceBiGSAM}
		Let $\Seq{\bx}{k}$ be a sequence generated by BiG-SAM and suppose that Assumptions
		\ref{A:Composite}, \ref{A:Omega} and \ref{A:AlphSeq} hold true. Then, the sequence 
		$\Seq{\bx}{k}$ converges to $\bx^{\ast} \in X^{\ast}$ which satisfies
		\begin{equation} \label{VarInequalityBiLevel}
			\act{\nabla \omega\left(\bx^{\ast}\right) , \bx - \bx^{\ast}} \geq 0, \quad \forall \,\, 
			\bx \in X^{\ast},
		\end{equation}
		and therefore, $\bx^{\ast} = \bx_{mn}^{\ast}$ is the optimal solution of problem 
		\eqref{Prob:MNP}.
	\end{proposition}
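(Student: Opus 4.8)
The plan is to recognize BiG-SAM as a concrete instance of the general SAM scheme and then to invoke Lemma \ref{L:Xu}, afterwards translating its conclusions into the language of problem \eqref{Prob:MNP}. First I would observe that, with the choices \eqref{TChoice} and \eqref{SChoice}, the general step of BiG-SAM reads $\by^{k} = T(\bx^{k-1})$, $\bz^{k} = S(\bx^{k-1})$ and
\begin{equation*}
\bx^{k} = \alpha_{k}\bz^{k} + \left(1 - \alpha_{k}\right)\by^{k} = \alpha_{k}S(\bx^{k-1}) + \left(1 - \alpha_{k}\right)T(\bx^{k-1}),
\end{equation*}
which is precisely the SAM iteration. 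It then remains only to check that the hypotheses of Lemma \ref{L:Xu} are met.

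Verifying these hypotheses is routine given the preparatory results. Since $t \in \left(0 , 1/L_{f}\right]$, Lemma \ref{L:ProxGradProp}(i) guarantees that $T = T_{t}$ is nonexpansive, while Lemma \ref{L:ProxGradProp}(ii) together with Assumption \ref{A:Composite}(iii) gives $\Fix(T) = X^{\ast} \neq \emptyset$. Since $s \in \left(0 , 2/\left(L_{\omega}+\sigma\right)\right]$, Proposition \ref{P:GradContraction} ensures that $S = I - s\nabla \omega$ is a $\beta$-contraction with $\beta < 1$. Finally, $\seq{\alpha}{k}$ satisfies Assumption \ref{A:AlphSeq} by hypothesis. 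Thus all requirements of Lemma \ref{L:Xu} hold, and part (ii) yields that $\Seq{\bx}{k}$ converges to some $\bx^{\ast} \in \Fix(T) = X^{\ast}$, while part (iii) gives $\act{\bx^{\ast} - S(\bx^{\ast}) , \bx - \bx^{\ast}} \geq 0$ for all $\bx \in X^{\ast}$. Substituting $\bx^{\ast} - S(\bx^{\ast}) = s\nabla \omega(\bx^{\ast})$ and dividing by $s > 0$ produces exactly \eqref{VarInequalityBiLevel}.

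The concluding step, which I regard as the main (though mild) point of the argument, is to interpret \eqref{VarInequalityBiLevel} as the optimality condition for \eqref{Prob:MNP}. Here I would use that $X^{\ast}$, being the set of minimizers of the convex problem \eqref{Prob:P}, is a convex set---this is the one fact that genuinely requires verification, and it follows directly from the convexity of $f$ and $g$ in Assumption \ref{A:Composite}. Because $\omega$ is convex and differentiable and $X^{\ast}$ is convex, the first-order condition $\act{\nabla \omega(\bx^{\ast}) , \bx - \bx^{\ast}} \geq 0$ for all $\bx \in X^{\ast}$ is both necessary and sufficient for $\bx^{\ast}$ to minimize $\omega$ over $X^{\ast}$, that is, for $\bx^{\ast}$ to solve \eqref{Prob:MNP}. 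Since $\omega$ is strongly convex by Assumption \ref{A:Omega}(i), the minimizer of \eqref{Prob:MNP} is unique, whence $\bx^{\ast} = \bx_{mn}^{\ast}$, which completes the proof.
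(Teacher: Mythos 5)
Your proposal is correct and follows essentially the same route as the paper's own proof: instantiate Lemma \ref{L:Xu} with the choices \eqref{TChoice} and \eqref{SChoice}, verify its hypotheses via Lemma \ref{L:ProxGradProp} and Proposition \ref{P:GradContraction}, and read \eqref{VarInequalityBiLevel} as the first-order optimality condition for \eqref{Prob:MNP}. Your explicit remarks on the convexity of $X^{\ast}$ and the uniqueness of the minimizer are welcome additions that the paper leaves implicit.
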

	\begin{proof}		
		Since Assumptions \ref{A:Composite} and \ref{A:Omega} hold true, by Lemma \ref{L:ProxGradProp} 
		and Proposition \ref{P:GradContraction}, we have that $S$ and $T$ which defined in 
		\eqref{SChoice} and \eqref{TChoice} are a contraction and a nonexpansive mapping, 
		respectively. Thus, all the assumptions of Lemma \ref{L:Xu} are valid and therefore we 
		immediately obtain that $\Seq{\bx}{k}$ converges to $\bx^{\ast} \in X^{\ast}$ (see Lemmas 
		\ref{L:ProxGradProp}(ii) and \ref{L:Xu}(ii)). The only remaining part is showing that the 
		variational inequality given in Lemma \ref{L:Xu}(iii) implies that 
		\eqref{VarInequalityBiLevel} holds true. Indeed, using the fact that $S = I - s\nabla \omega$ 
		we obtain that \eqref{L:Xu:VariationalInequality} is equivalent to
		\begin{equation*}
			\act{\bx^{\ast} - \left(\bx^{\ast} - s\nabla \omega\left(\bx^{\ast}\right)\right) , \bx -
			\bx^{\ast}} \geq 0, \quad \forall \,\, \bx\in X^{\ast},
		\end{equation*}
		which directly implies that \eqref{VarInequalityBiLevel} holds true, since $s > 0$. This means 
		that $\bx^{\ast}$ satisfies the first order optimality condition for constrained convex 
		problems (see, for example, \cite[Proposition 2.1.2, Page 194]{B99}) and therefore $\bx^{\ast} 
		= \bx_{mn}^{\ast}$, as asserted.
	\end{proof}
	
\section{Rate of Convergence Analysis} \label{Sec:ConvergenceAnalysis}
	In this section we will first prove a technical result about the rate of convergence of the gap 
	between two successive iterations generated by SAM in its most generality, for solving the 
	fixed-point problem, as described in Section \ref{SSec:XuMethod}. Then, we will use it to derive 
	the main result of our paper which is a rate of convergence result for BiG-SAM in terms of the 
	inner objective function values. This rate is superior to the one presented in \cite{BS2014} for 
	the case of differentiable $\omega$, and holds true for any contraction mapping $S$ regardless of 
	$\omega$.
\medskip

	We first present a technical lemma which will assist us in the rate of convergence proof. The 
	proof of this lemma is given in Appendix \ref{A:ProofLemmaAlpha}.
	\begin{lemma} \label{L:Alpha}
		Let $M > 0$. Assume that $\seq{a}{k}$ is a sequence of nonnegative real numbers which satisfy 
		$a_{1} \leq M$ and 	
		\begin{equation*}
			a_{k + 1} \leq \left(1 - \gamma b_{k + 1}\right)a_{k} + \left(b_{k} - b_{k + 1}\right) 
			c_{k}, \quad k \geq 1,
		\end{equation*}
		where $\gamma \in \left(0 , 1\right]$, $\seq{b}{k}$ is a sequence which is defined by $b_{k} = 
		\min\left\{ {2}/\left(\gamma k\right) , 1 \right\}$, and $\seq{c}{k}$ is a sequence of real 
		numbers such that $c_{k} \leq M < \infty$. Then, the sequence $\seq{a}{k}$ satisfies
		\begin{equation*}
			a_{k} \leq \frac{MJ}{\gamma k}, \quad k \geq 1,
		\end{equation*}
		where $J = \lfloor 2/\gamma \rfloor$.
	\end{lemma}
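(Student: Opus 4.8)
The plan is to establish the bound by induction on $k$, tracking the recursive inequality through the two regimes of the sequence $\seq{b}{k}$, namely where $b_{k} = 1$ (small $k$) and where $b_{k} = 2/(\gamma k)$ (large $k$). Since $b_{k} = \min\{2/(\gamma k) , 1\}$, the crossover occurs at the threshold $k_{0}$ where $2/(\gamma k) \leq 1$, that is, at $k_{0} = \lceil 2/\gamma \rceil$; this is why the constant $J = \lfloor 2/\gamma \rfloor$ appears in the target bound. First I would verify the base case and the early terms: while $b_{k} = 1$, the difference $b_{k} - b_{k+1}$ vanishes (both equal $1$), so the recursion collapses to $a_{k+1} \leq (1 - \gamma) a_{k}$, which keeps the $a_{k}$ bounded by $M$ and in fact decreasing, so the claimed bound $a_{k} \leq MJ/(\gamma k)$ holds comfortably in this range because $MJ/(\gamma k) \geq M$ there.

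Next I would handle the main regime $k \geq k_{0}$, where $b_{k} = 2/(\gamma k)$. Substituting this into the recursion gives
\begin{equation*}
	a_{k+1} \leq \left(1 - \frac{2}{k}\right) a_{k} + \left(\frac{2}{\gamma k} - \frac{2}{\gamma (k+1)}\right) c_{k}.
\end{equation*}
Using $c_{k} \leq M$ and simplifying the telescoping difference $\frac{2}{\gamma k} - \frac{2}{\gamma(k+1)} = \frac{2}{\gamma k (k+1)}$, the inductive hypothesis $a_{k} \leq MJ/(\gamma k)$ would be fed in to bound the right-hand side. The goal is to show the resulting expression is at most $MJ/(\gamma(k+1))$. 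This reduces, after clearing denominators, to an elementary algebraic inequality in $k$, $\gamma$, and $J$; the factor $(1 - 2/k)$ provides the contraction needed to absorb the perturbation term, and the choice $J = \lfloor 2/\gamma \rfloor$ is precisely what makes the induction close.

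The main obstacle I anticipate is the matching at the boundary $k = k_{0}$: I must confirm that the bound carried out of the $b_{k} = 1$ regime is strong enough to seed the induction in the $b_{k} = 2/(\gamma k)$ regime, since at the transition point the two expressions for $b_{k}$ and $b_{k+1}$ may disagree (one could have $b_{k} = 1$ while $b_{k+1} = 2/(\gamma(k+1))$), so the difference $b_{k} - b_{k+1}$ must be estimated carefully rather than telescoped cleanly. I would treat this single transition step separately, bounding $b_{k} - b_{k+1} \leq 1$ crudely and using $c_{k} \leq M$, then checking that the resulting $a_{k_{0}+1}$ still satisfies the target. Once the seed and the inductive step in the main regime are secured, the claimed rate $a_{k} \leq MJ/(\gamma k)$ follows for all $k \geq 1$.
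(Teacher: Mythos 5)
Your overall strategy --- splitting at the crossover index and running an induction in the regime where $b_{k} = 2/(\gamma k)$ --- is the same as the paper's, and both the first regime and the main inductive step are sound (modulo a small index slip: the contraction factor supplied by the recursion is $1-\gamma b_{k+1} = 1 - \tfrac{2}{k+1}$, not $1-\tfrac{2}{k}$; with the correct factor the induction still closes, since $\bigl(1-\tfrac{2}{k+1}\bigr)\tfrac{MJ}{\gamma k} + \tfrac{2M}{\gamma k (k+1)} \le \tfrac{MJ}{\gamma (k+1)}$ follows from $J \ge 2$).

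However, your proposed treatment of the single transition step is a genuine gap: bounding $b_{k}-b_{k+1}\le 1$ there and then ``checking the target'' does not work. Take $\gamma = 1$, so $J=2$ and the transition is $k=2\to 3$ with $b_{2}=1$, $b_{3}=2/3$. The crude estimate gives only $a_{3}\le \tfrac{1}{3}a_{2} + M \le \tfrac{4}{3}M$ (and even the sharper $a_2\le(1-\gamma)a_1=0$ still leaves $a_3\le M$), whereas the target is $\tfrac{MJ}{3\gamma}=\tfrac{2}{3}M$; more generally the check with $b_k-b_{k+1}\le 1$ fails whenever $\gamma>1/2$. The observation you are missing --- and the one the paper's proof exploits --- is that $b_{k}\le 2/(\gamma k)$ holds for \emph{every} $k$, since $b_k$ is a minimum with $2/(\gamma k)$; hence even at the transition one has $b_{k}-b_{k+1}\le \tfrac{2}{\gamma k}-\tfrac{2}{\gamma (k+1)} = \tfrac{2}{\gamma k (k+1)}$, and the telescoping estimate from the main regime applies verbatim. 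With that replacement no separate crude case is needed and your induction closes at the transition as well.
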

	For ease of notation, from this point onward, we will denote, for any $k \in \nn$, $\by^{k} = T
	\left(\bx^{k - 1}\right)$ and $\bz^{k} = S\left(\bx^{k - 1}\right)$. For convenience we will split 
	the rate analysis into two technical results which will lead us to the main result given in 
	Theorem \ref{T:FunctionRate}. In Lemma \ref{L:Usefulinequalities} we present some useful 
	inequalities, and in Lemma \ref{L:ConvergenceRate} we show that by choosing an appropriate 
	sequence $\seq{\alpha}{k}$ we can bound the distance between two successive elements of the 
	sequence $\Seq{\bx}{k}$ by $O(1/k)$ and to show that the sequence $\Seq{\bx}{k}$ converges, with 
	the same rate, to a fixed-point of $T$.
	\begin{lemma} \label{L:Usefulinequalities}
		Assume that $S : \real^{n} \rightarrow \real^{n}$ is a $\beta$-contraction and that $T :
		\real^{n} \rightarrow \real^{n}$ is nonexpansive mapping, for which $\Fix(T) \neq \emptyset$. 
		Let $\Seq{\bx}{k}$, $\Seq{\by}{k}$ and $\Seq{\bz}{k}$ be sequences generated by SAM. Then, for 
		any $k \geq 1 $ and any $\tilde{\bx} \in \Fix(T)$, defining $\tilde{\bz} = S\left(\tilde{\bx}
		\right)$ the following inequalities hold true
		\begin{align}
			\norm{\by^{k + 1} - \by^{k}} & \leq \norm{\bx^{k} - \bx^{k - 1}}, \label{NonexpansiveYk} 
			\\
			\norm{\bz^{k + 1} - \bz^{k}} & \leq \beta\norm{\bx^{k} - \bx^{k - 1}}, 
			\label{NonexpansiveZk} \\
			\norm{\by^{k} - \tilde{\bx}} & \leq \norm{\bx^{k - 1} - \tilde{\bx}},
			\label{NonexpansiveYstar} \\
			\norm{\bz^{k} - \tilde{\bz}} & \leq \beta\norm{\bx^{k - 1} - \tilde{\bx}}.					
			\label{NonexpansiveZstar}
		\end{align}
	\end{lemma}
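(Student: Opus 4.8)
The plan is to derive all four inequalities directly from the defining relations $\by^{k} = T\left(\bx^{k - 1}\right)$ and $\bz^{k} = S\left(\bx^{k - 1}\right)$, combined with the nonexpansiveness of $T$ and the $\beta$-contraction property of $S$. No machinery beyond these two properties is required, so the entire argument reduces to substituting the definitions and invoking the relevant inequality once in each case.

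First I would handle the two inequalities comparing successive iterates, namely \eqref{NonexpansiveYk} and \eqref{NonexpansiveZk}. Using the definitions of the sequences $\Seq{\by}{k}$ and $\Seq{\bz}{k}$, I can write $\by^{k + 1} - \by^{k} = T\left(\bx^{k}\right) - T\left(\bx^{k - 1}\right)$ and $\bz^{k + 1} - \bz^{k} = S\left(\bx^{k}\right) - S\left(\bx^{k - 1}\right)$. Applying the nonexpansiveness of $T$ to the first difference yields $\norm{\by^{k + 1} - \by^{k}} \leq \norm{\bx^{k} - \bx^{k - 1}}$, and applying the $\beta$-contraction property of $S$ to the second yields $\norm{\bz^{k + 1} - \bz^{k}} \leq \beta\norm{\bx^{k} - \bx^{k - 1}}$. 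This settles the first two claims.

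Next I would address the inequalities measuring the distance to a fixed point, \eqref{NonexpansiveYstar} and \eqref{NonexpansiveZstar}. Fixing any $\tilde{\bx} \in \Fix(T)$ and setting $\tilde{\bz} = S\left(\tilde{\bx}\right)$, I would use that $\tilde{\bx}$ is a fixed point of $T$, so $T\left(\tilde{\bx}\right) = \tilde{\bx}$ and hence $\by^{k} - \tilde{\bx} = T\left(\bx^{k - 1}\right) - T\left(\tilde{\bx}\right)$; nonexpansiveness of $T$ then gives $\norm{\by^{k} - \tilde{\bx}} \leq \norm{\bx^{k - 1} - \tilde{\bx}}$. Similarly, $\bz^{k} - \tilde{\bz} = S\left(\bx^{k - 1}\right) - S\left(\tilde{\bx}\right)$, and the $\beta$-contraction property of $S$ gives $\norm{\bz^{k} - \tilde{\bz}} \leq \beta\norm{\bx^{k - 1} - \tilde{\bx}}$, completing the proof.

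Since each step is a one-line application of an already established property, there is no genuine obstacle to overcome; the lemma is purely preparatory. The only point requiring slight care is index bookkeeping---ensuring the shifted relation $\by^{k} = T\left(\bx^{k - 1}\right)$ is invoked with the correct argument in each of the four inequalities---but this is entirely routine. The real significance of these bounds lies in how they are later assembled in Lemma \ref{L:ConvergenceRate} to control the gap $\norm{\bx^{k} - \bx^{k - 1}}$ and drive the $O(1/k)$ rate.
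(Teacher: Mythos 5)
Your proof is correct and takes exactly the same route as the paper, which simply states that all four inequalities are direct consequences of the nonexpansivity of $T$ and the contraction property of $S$; you have merely written out the one-line argument in full detail, with the correct index bookkeeping.
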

	\begin{proof}		
		All the required inequalities are a direct consequence of the nonexpansivity of $T$ and the 
		contraction property of $S$.
	\end{proof}
	Now we prove the rate of convergence to zero of the sequence $\left\{ \norm{\bx^{k} - \bx^{k - 1}} 
	\right\}_{k \in \nn}$, where $\Seq{\bx}{k}$ is generated by SAM and the averaging parameters 
	$\alpha_{k}$, $k \in \nn$, are chosen as follows
	\begin{equation} \label{ChooseAlpha}
		\alpha_{k} = \min \left\{ \frac{2\gamma}{k\left(1 - \beta\right)} , 1 \right\}, \quad k \geq 
		1,
	\end{equation}
	where $\gamma \in \left(0 , 1\right]$. For the simplicity of the developments, we will prove our 
	results when $\gamma = 1$. It should be noted that all the results below remains valid also when $
	\gamma$ is chosen arbitrarily from the interval $\left(0 , 1\right]$.
	\begin{lemma} \label{L:ConvergenceRate}
		Let $\Seq{\bx}{k}$, $\Seq{\by}{k}$ and $\Seq{\bz}{k}$ be sequences generated by SAM where 
		$\seq{\alpha}{k}$ is defined by \eqref{ChooseAlpha}. Then, for any $\tilde{\bx} \in \Fix(T)$, 
		the two sequences $\left\{ \norm{\bx^{k} - \bx^{k - 1}} \right\}_{k \in \nn}$ and $\left\{ 
		\norm{\by^{k} - \bx^{k - 1}} \right\}_{k \in \nn}$ converge to $\bo$, and  the rates of 
		convergence are given by
		\begin{equation} \label{T:ConvergenceRate:0}
			\norm{\bx^{k} - \bx^{k - 1}} \leq \frac{2C_{\tilde{\bx}}J}{\left(1 - \beta\right)k}, \quad 
			k \geq 1,
		\end{equation}
		and
		\begin{equation} \label{T:ConvergenceRate:00}
			\norm{\by^{k} - \bx^{k - 1}} \leq \frac{2C_{\tilde{\bx}}\left(J + 2\right)}{\left(1 -
			\beta\right)k}, \quad k \geq 1,
		\end{equation}		 
		where $C_{\tilde{\bx}}$ is defined in \eqref{D:Cx}, and $J = \lfloor 2/\left(1 - \beta\right) 
		\rfloor$.				
	\end{lemma}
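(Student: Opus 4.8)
The plan is to set $a_k := \norm{\bx^k - \bx^{k-1}}$, derive a one-step recursion of exactly the form treated by Lemma \ref{L:Alpha}, and then read off the rate. First I would expand the SAM update at two consecutive indices and regroup the terms, obtaining
\[
	\bx^{k+1} - \bx^k = \alpha_{k+1}\left(\bz^{k+1} - \bz^k\right) + \left(1 - \alpha_{k+1}\right)\left(\by^{k+1} - \by^k\right) + \left(\alpha_{k+1} - \alpha_k\right)\left(\bz^k - \by^k\right).
\]
Taking norms and invoking \eqref{NonexpansiveYk} and \eqref{NonexpansiveZk} from Lemma \ref{L:Usefulinequalities} yields
\[
	a_{k+1} \leq \left(1 - \alpha_{k+1}\left(1 - \beta\right)\right) a_k + \left|\alpha_{k+1} - \alpha_k\right| \norm{\bz^k - \by^k}.
\]

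The key auxiliary estimate is the uniform bound $\norm{\bz^k - \by^k} \leq 2C_{\tilde{\bx}}$, which turns the perturbation term into a genuine constant $c_k$. I would obtain it via the triangle inequality through $\tilde{\bx} \in \Fix(T)$: the term $\norm{\by^k - \tilde{\bx}}$ is at most $C_{\tilde{\bx}}$ by \eqref{NonexpansiveYstar} together with Lemma \ref{L:Xu}(i), while for $\norm{\bz^k - \tilde{\bx}}$ I would pass through $\tilde{\bz} = S(\tilde{\bx})$, using \eqref{NonexpansiveZstar} to get $\norm{\bz^k - \tilde{\bz}} \leq \beta C_{\tilde{\bx}}$ and the definition \eqref{D:Cx} of $C_{\tilde{\bx}}$ to get $\norm{\tilde{\bx} - \tilde{\bz}} = \norm{\left(I - S\right)\tilde{\bx}} \leq \left(1-\beta\right)C_{\tilde{\bx}}$; adding these gives $\norm{\bz^k - \tilde{\bx}} \leq C_{\tilde{\bx}}$, and hence $\norm{\bz^k - \by^k} \leq 2C_{\tilde{\bx}}$.

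With this in hand the recursion becomes $a_{k+1} \leq \left(1 - \alpha_{k+1}\left(1-\beta\right)\right) a_k + \left|\alpha_{k+1} - \alpha_k\right| \cdot 2C_{\tilde{\bx}}$. I would then match it to Lemma \ref{L:Alpha} by setting $\gamma := 1 - \beta$, $b_k := \alpha_k = \min\left\{ 2/\left(\left(1-\beta\right)k\right), 1 \right\}$ so that $\gamma b_{k+1} = \alpha_{k+1}\left(1-\beta\right)$, and $c_k := 2C_{\tilde{\bx}}$. Since $b_k$ is non-increasing in $k$, one has $\left|\alpha_{k+1} - \alpha_k\right| = b_k - b_{k+1}$, so the recursion is precisely the hypothesis of Lemma \ref{L:Alpha} with $M = 2C_{\tilde{\bx}}$. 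The remaining check is the initialization $a_1 \leq M$: expanding $\bx^1 = \alpha_1 \bz^1 + \left(1-\alpha_1\right)\by^1$ and bounding $\norm{\by^1 - \bx^0}$ and $\norm{\bz^1 - \bx^0}$ by $2C_{\tilde{\bx}}$ (again through $\tilde{\bx}$, as above) gives $a_1 \leq 2C_{\tilde{\bx}}$. Lemma \ref{L:Alpha} then delivers \eqref{T:ConvergenceRate:0} with $J = \lfloor 2/\left(1-\beta\right)\rfloor$.

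For the second bound I would use the update identity $\by^k - \bx^k = \alpha_k\left(\by^k - \bz^k\right)$, whence $\norm{\by^k - \bx^k} \leq \alpha_k \cdot 2C_{\tilde{\bx}} \leq 4C_{\tilde{\bx}}/\left(\left(1-\beta\right)k\right)$, and combine it with \eqref{T:ConvergenceRate:0} through $\norm{\by^k - \bx^{k-1}} \leq \norm{\by^k - \bx^k} + \norm{\bx^k - \bx^{k-1}}$ to produce \eqref{T:ConvergenceRate:00}. The hard part will be establishing the uniform bound on $\norm{\bz^k - \by^k}$, since it is what supplies the finite constant $c_k$ required by Lemma \ref{L:Alpha}; the rest is bookkeeping, with the monotonicity of $\alpha_k$ being the detail that lets the difference $\left|\alpha_{k+1} - \alpha_k\right|$ collapse cleanly into $b_k - b_{k+1}$.
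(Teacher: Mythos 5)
Your proposal is correct and follows essentially the same route as the paper's proof: the same regrouped one-step recursion, the same uniform bound $\norm{\by^{k}-\bz^{k}}\leq 2C_{\tilde{\bx}}$ via the triangle inequality through $\tilde{\bx}$ and $\tilde{\bz}$, the same invocation of Lemma \ref{L:Alpha} with $\gamma = 1-\beta$ and $M = 2C_{\tilde{\bx}}$, and the same splitting $\norm{\by^{k}-\bx^{k-1}}\leq\norm{\by^{k}-\bx^{k}}+\norm{\bx^{k}-\bx^{k-1}}$ for the second rate. The only cosmetic differences are that the paper bounds $\norm{\bx^{1}-\bx^{0}}$ directly through $\tilde{\bx}$ rather than by expanding the update, and groups the triangle inequality for $\norm{\by^{k}-\bz^{k}}$ slightly differently.
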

	\begin{proof}
		By the definition of $\bx^{k}$ and $\bx^{k + 1}$ we immediately obtain
		\begin{align}
			\norm{\bx^{k + 1} - \bx^{k}} & = \norm{\left(1 - \alpha_{k + 1}\right)\by^{k + 1} + 
			\alpha_{k + 1}\bz^{k + 1} - \left(\left(1 - \alpha_{k}\right)\by^{k} + \alpha_{k}\bz^{k} 
			\right)} \nonumber \\
			& = \norm{\left(1 - \alpha_{k + 1}\right)\left(\by^{k + 1} - \by^k\right) + \alpha_{k + 1}
			\left(\bz^{k + 1} - \bz^{k}\right) + \left(\alpha_{k} - \alpha_{k + 1}\right)\left(\by^{k} 
			- \bz^{k}\right)} \nonumber \\			
			& \leq \left(1 - \alpha_{k + 1}\right)\norm{\by^{k + 1} - \by^{k}} + \alpha_{k + 1} 
			\norm{\bz^{k + 1} - \bz^{k}} + \left(\alpha_{k} - \alpha_{k + 1}\right)\norm{\by^{k} -
			\bz^{k}} \nonumber \\
			&\leq \left(1 - \alpha_{k + 1}\right)\norm{\bx^{k} - \bx^{k - 1}} + \alpha_{k + 1}\beta 
			\norm{\bx^{k} - \bx^{k - 1}} + \left(\alpha_{k} - \alpha_{k + 1}\right)\norm{\by^{k} -
			\bz^{k}} \nonumber \\
			& = \left(1 - \alpha_{k + 1}\left(1 - \beta\right)\right)\norm{\bx^{k} - \bx^{k - 1}} +
			\left(\alpha_{k} - \alpha_{k + 1}\right){\norm{\by^{k} - \bz^{k}}}, 
			\label{T:ConvergenceRate:1}
		\end{align}
		where the second inequality follows from \eqref{NonexpansiveYk} and \eqref{NonexpansiveZk}. 
		Now, let $\tilde{\bx} \in \Fix(T)$ and let $\tilde{\bz} = S\left(\tilde{\bx}\right)$, then
		\begin{align}
			\norm{\by^{k} - \bz^{k}} & = \norm{\by^{k} - \tilde{\bx} + \tilde{\bx} - \tilde{\bz} + 
			\tilde{\bz} - \bz^{k}} \nonumber \\
			& \leq \norm{\by^{k} - \tilde{\bx}} + \norm{\tilde{\bx} - \tilde{\bz}} + \norm{\tilde{\bz} 
			- \bz^{k}} \nonumber \\
			& \leq \norm{\bx^{k - 1} - \tilde{\bx}} + \norm{\left(I - S\right)\tilde{\bx}} + \beta
			\norm{\bx^{k - 1} - \tilde{\bx}} \nonumber \\
			& \leq C_{\tilde{\bx}} + \left(1 - \beta\right)C_{\tilde{\bx}} + \beta C_{\tilde{\bx}} = 
			2C_{\tilde{\bx}}, \label{T:ConvergenceRate:2}
		\end{align}
		where the second inequality follows from \eqref{NonexpansiveYstar} and 
		\eqref{NonexpansiveZstar}, as well as the definition of $\tilde{\bz}$, and the last inequality 
		follows from Lemma \ref{L:Xu}(i). Additionally, we have that
		\begin{equation}\label{T:ConvergenceRate:3}
			\norm{\bx^{1} - \bx^{0}} = \norm{\bx^{1} - \tilde{\bx} + \tilde{\bx} - \bx^{0}} \leq 
			\norm{\bx^{1} - \tilde{\bx}} + \norm{\bx^{0} - \tilde{\bx}} \leq 2C_{\tilde{\bx}},
		\end{equation}
		where the second inequality follows from Lemma \ref{L:Xu}(i). The convergence rate for the 
		sequence $\left\{ \norm{\bx^{k} - \bx^{k - 1}} \right\}_{k \in \nn}$ is now an immediate 
		result of applying Lemma \ref{L:Alpha} on \eqref{T:ConvergenceRate:1} with $a_{k} := 
		\norm{\bx^{k} - \bx^{k - 1}}$, $b_{k} := \alpha_{k}$, $\gamma := 1 - \beta$ and $c_{k} := 
		\norm{\by^{k} - \bz^{k}}$ and using \eqref{T:ConvergenceRate:2} and 
		\eqref{T:ConvergenceRate:3} where $M := 2C_{\tilde{\bx}}$. This proves 
		\eqref{T:ConvergenceRate:0}.
\medskip
		
		The convergence rate for $\left\{ \norm{\by^{k} - \bx^{k - 1}} \right\}_{k \in \nn}$ is 
		derived by the following arguments
		\begin{align*}
			\norm{\by^{k} - \bx^{k - 1}} & = \norm{\by^{k} - \bx^{k} + \bx^{k} - \bx^{k - 1}} \\
			& \leq \norm{\by^{k} - \bx^{k}} + \norm{\bx^{k} - \bx^{k - 1}} \\
			& = \alpha_{k}\norm{\by^{k} - \bz^{k}} + \norm{\bx^{k} - \bx^{k - 1}} \\
			& \leq\frac{2}{\left(1 - \beta\right)k}2C_{\tilde{\bx}} + \frac{2C_{\tilde{\bx}}J}{\left(1 
			- \beta\right)k} \\
			& = \frac{2C_{\tilde{\bx}}\left(J + 2\right)}{\left(1 - \beta\right)k},
		\end{align*}		
		where the second inequality is due to the previous result as well as 
		\eqref{T:ConvergenceRate:2}, which was already proven.
	\end{proof}			
	It should be noted that in the case of BiG-SAM, the rate of convergence proven in 
	\eqref{T:ConvergenceRate:00} means that the sequence $\Seq{\bx}{k}$ converges to an optimal 
	solution of the inner problem (P) with a rate of $O(1/k)$.
\medskip

	Now we turn to discuss the main result which is convergence of the sequence $\left\{ \varphi
	\left(\by^{k}\right) \right\}_{k \in \nn}$. We discuss the convergence of this sequence rather 
	than the convergence of the sequence $\left\{ \varphi\left(\bx^{k}\right) \right\}_{k \in \nn}$ 
	since the latter might be an infeasible in terms of the domain of the function $\varphi$ (see also 
	\cite{BS2014}). Moreover, since we proved that $\norm{\by^{k} - \bx^{k - 1}}\rightarrow 0$ as $k 
	\rightarrow \infty$ and $\varphi$ is lower semicontinuous it follows that proving convergence of 
	the sequence $\left\{ \varphi\left(\by^{k}\right) \right\}_{k \in \nn}$ to the optimal value also 
	implies convergence of the sequence $\left\{ \varphi\left(\bx^{k}\right) \right\}_{k \in \nn}$ to 
	the same value. 
	\begin{theorem} \label{T:FunctionRate}
		Let $\Seq{\bx}{k}$, $\Seq{\by}{k}$ and $\Seq{\bz}{k}$ be sequences generated by BiG-SAM where 
		$\seq{\alpha}{k}$ is defined by \eqref{ChooseAlpha}. Then, for all $t \leq 1/L_{f}$ and $k \in 
		\nn$, we have that
		\begin{equation*}
			\varphi\left(\by^{k}\right) - \varphi\left(\bx_{mn}^{\ast}\right) \leq \frac{2C_{\bx_{mn}
			^{\ast}}^{2}\left(J + 2\right)}{\left(k + 1\right)\left(1 - \beta\right)t},
		\end{equation*}
		where $C_{\bx_{mn}^{\ast}}$ is defined in \eqref{D:Cx} and $J = \lfloor 2/\left(1 - \beta
		\right) \rfloor$.					
	\end{theorem}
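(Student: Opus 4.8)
The plan is to convert the already-established $O(1/k)$ decay of the step gap $\norm{\by^{k} - \bx^{k - 1}}$ (Lemma \ref{L:ConvergenceRate}) into an $O(1/k)$ bound on the inner optimality gap $\varphi\left(\by^{k}\right) - \varphi\left(\bx_{mn}^{\ast}\right)$ by feeding it through the descent estimate of Proposition \ref{P:ProximalInequality}. Two preliminary identifications make this possible: in BiG-SAM one has $\by^{k} = T_{t}\left(\bx^{k - 1}\right)$ with $T_{t}$ the prox-grad mapping, and by Lemma \ref{L:ProxGradProp}(ii) the outer optimum $\bx_{mn}^{\ast}$ belongs to $X^{\ast} = \Fix(T)$. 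Hence $\bx_{mn}^{\ast}$ is a legitimate choice both for the comparison point $\bu$ in Proposition \ref{P:ProximalInequality} and for the reference point $\tilde{\bx}$ in the boundedness constant $C_{\tilde{\bx}}$ of \eqref{D:Cx}.

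First I would apply Proposition \ref{P:ProximalInequality} with $\bx = \bx^{k - 1}$, $\bx^{+} = \by^{k}$ and $\bu = \bx_{mn}^{\ast}$, which for $t \leq 1/L_{f}$ gives
\begin{equation*}
\varphi\left(\by^{k}\right) - \varphi\left(\bx_{mn}^{\ast}\right) \leq \frac{1}{t}\act{\bx^{k - 1} - \by^{k} , \bx^{k - 1} - \bx_{mn}^{\ast}} - \frac{1}{2t}\norm{\bx^{k - 1} - \by^{k}}^{2}.
\end{equation*}
Writing $e_{k} = \norm{\bx^{k - 1} - \by^{k}}$, applying Cauchy--Schwarz to the inner product, and invoking $\norm{\bx^{k - 1} - \bx_{mn}^{\ast}} \leq C_{\bx_{mn}^{\ast}}$ from Lemma \ref{L:Xu}(i), this becomes
\begin{equation*}
\varphi\left(\by^{k}\right) - \varphi\left(\bx_{mn}^{\ast}\right) \leq \frac{1}{t}\,e_{k}\,C_{\bx_{mn}^{\ast}} - \frac{1}{2t}e_{k}^{2}.
\end{equation*}
The gap estimate \eqref{T:ConvergenceRate:00}, specialized to $\tilde{\bx} = \bx_{mn}^{\ast}$, then supplies $e_{k} \leq B/k$ with $B = 2C_{\bx_{mn}^{\ast}}(J + 2)/(1 - \beta)$, and substituting this into the right-hand side yields the claimed $O(1/k)$ rate.

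The conceptual work is modest; the delicate point is the bookkeeping that produces the sharp $1/(k + 1)$ denominator rather than the $1/k$ one that a crude estimate gives. The key is precisely \emph{not} to discard the nonpositive quadratic term $-e_{k}^{2}/(2t)$. When $B/k \leq C_{\bx_{mn}^{\ast}}$ the value $e_{k}$ lies on the increasing branch of the concave parabola $e \mapsto eC_{\bx_{mn}^{\ast}} - e^{2}/2$, so that
\begin{equation*}
e_{k}C_{\bx_{mn}^{\ast}} - \tfrac{1}{2}e_{k}^{2} \leq \frac{BC_{\bx_{mn}^{\ast}}}{k} - \frac{B^{2}}{2k^{2}},
\end{equation*}
and the inequality $\frac{BC_{\bx_{mn}^{\ast}}}{k} - \frac{B^{2}}{2k^{2}} \leq \frac{BC_{\bx_{mn}^{\ast}}}{k + 1}$ reduces to the elementary estimate $2kC_{\bx_{mn}^{\ast}} \leq (k + 1)B$, which holds since $J + 2 > 1 - \beta$. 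The complementary range $B/k > C_{\bx_{mn}^{\ast}}$ is handled directly by bounding the parabola by its global maximum $C_{\bx_{mn}^{\ast}}^{2}/2$. I expect this final constant-tracking step, together with the boundary case just mentioned, to be the only part requiring genuine care; everything else is a direct assembly of Proposition \ref{P:ProximalInequality}, Lemma \ref{L:ConvergenceRate} and Lemma \ref{L:Xu}(i).
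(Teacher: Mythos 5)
Your proof is correct and follows essentially the same route as the paper's: Proposition \ref{P:ProximalInequality} with $\bu = \bx_{mn}^{\ast}$, Cauchy--Schwarz, the bound $\norm{\bx^{k-1}-\bx_{mn}^{\ast}} \leq C_{\bx_{mn}^{\ast}}$ from Lemma \ref{L:Xu}(i), and the $O(1/k)$ gap estimate \eqref{T:ConvergenceRate:00}. The only difference is cosmetic: the paper discards the quadratic term $-\norm{\bx^{k-1}-\by^{k}}^{2}/(2t)$ and gets the $(k+1)$ denominator simply by writing the inequality at index $k+1$, whereas you retain that term and run a short parabola argument to extract the same $(k+1)$ at index $k$.
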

	\begin{proof}
		From Proposition \ref{P:ProximalInequality} we have, for any step-size $t \leq 1/L_{f}$, that 
		the following inequality holds true
		\begin{equation} \label{C:FunctionRate:1}
			\varphi\left(\by^{k + 1}\right) - \varphi\left(\bx_{mn}^{\ast}\right) \leq \frac{1}{t}
			\act{\bx^{k} - \by^{k + 1} , \bx^{k} - \bx_{mn}^{\ast}} - \frac{1}{2t}\norm{\bx^{k} - 
			\by^{k + 1}}^{2}.
		\end{equation}
		Applying Lemmas \ref{L:Xu}(i) and \ref{L:ConvergenceRate} for $\bx_{mn}^{\ast} \in X^{\ast} = 
		\Fix(T_{t})$ we obtain that
		\begin{equation} \label{C:FunctionRate:2}
			\act{\bx^{k} - \by^{k + 1} , \bx^{k} - \bx_{mn}^{\ast}} \leq \norm{\bx^{k} - \by^{k + 1}}
			\cdot \norm{\bx^{k} - \bx_{mn}^{\ast}} \leq \frac{2C_{\bx_{mn}^{\ast}}^{2}\left(J + 
			2\right)}{\left(1 - \beta\right)\left(k + 1\right)}.
		\end{equation}	
		Substituting \eqref{C:FunctionRate:1} back into \eqref{C:FunctionRate:2} we obtain that
		\begin{equation*}
			\varphi\left(\by^{k + 1}\right) - \varphi\left(\bx_{mn}^{\ast}\right) \leq 
			\frac{2C_{\bx_{mn}^{\ast}}^{2}\left(J + 2\right)}{\left(k + 1\right)\left(1 - \beta
			\right)t},
		\end{equation*}
		which proves the desired result.				
	\end{proof}
	\begin{remark}
		The step-size $s$, which is used in step \eqref{OmegaGradStep}, should be chosen such that the 
		mapping $S$ is a contraction. According to Proposition \ref{P:GradContraction} the step-size 
		$s$ depends on the knowledge of $L_{\omega}$ and $\sigma$, or at least on an upper bound on
		$L_{\omega} + \sigma$. Moreover, in order to calculate $\alpha_{k}$, $k \in \nn$, we need an 
		upper bound on the contraction parameter $\beta$ which also depends on $L_{\omega}$ and 
		$\sigma$ or a lower bound on $\sigma L_{\omega}$.
	\end{remark}

\subsection{SAM for Nonsmooth Bi-level Optimization Problems} \label{SSec:SAMNonsmooth}
	In this section we focus on problem \eqref{Prob:MNP} as described in Section \ref{SSec:BiLevelOpt} 
	for which the objective function $\omega$ does not necessarily satisfy Assumption \ref{A:Omega}. 
	Here we replace it by the following milder assumption. 
	\setcounter{assumption}{1}
	\renewcommand{\theassumption}{\Alph{assumption}'}
	\begin{assumption} \label{A:OmegaNonsmooth}
		 $\omega : \real^{n} \rightarrow \real$ is strongly convex with parameter $\sigma > 0$ and 
		 $\ell_{\omega}$-Lipschitz continuous.
	\end{assumption}
	It is clear that BiG-SAM can not be applied to bi-level problems for which $\omega$ satisfies 
	Assumption \ref{A:OmegaNonsmooth} instead of Assumption \ref{A:Omega}, since $\omega$ is not 
	necessarily differentiable. However, due to the strong convexity of $\omega$ we may use BiG-SAM in 
	the following way.
\medskip
	
	We will use the Moreau envelope $M_{s\omega}$ of $\omega$ as a smooth replacement of the original 
	objective function $\omega$. As we have already mentioned in Section \ref{SSec:BiLevelOpt}, the 
	Moreau envelope is continuously differentiable, its gradient is Lipschitz continuous with constant 
	$1/s$ and strongly convex (see Proposition \ref{P:StrongConvexMoreauEnvelope}). Based on these 
	facts we obtain that $M_{s\omega}$ satisfies Assumption \ref{A:Omega} and therefore BiG-SAM can be 
	applied in this case on the Moreau envelope $M_{s\omega}$. It should be noted that in this case 
	step \eqref{OmegaGradStep} is given by
	\begin{equation} \label{ComputeZnonsmooth}
		\bz^{k} = \bx^{k - 1} - s\nabla M_{s\omega}\left(\bx^{k - 1}\right) = \bx^{k - 1} - s\frac{1}
		{s}\left(\bx^{k - 1} - \prox_{s\omega}\left(\bx^{k - 1}\right)\right) = \prox_{s\omega}
		\left(\bx^{k - 1}\right),
	\end{equation}
	where the second equality follows from \eqref{MoreauEnvelopeGradient}. This means that in order to 
	obtain $\bz^{k}$, $k \in \nn$, we need to compute the proximal mapping of $\omega$.
	\begin{remark}
		Based on the equality given in \eqref{ComputeZnonsmooth} it can be seen that BiG-SAM applied 
		on the Moreau envelope $M_{s\omega}$ is exactly SAM which is applied to the bi-level problem 
		with $S$ being the proximal mapping of $\omega$. In this respect it should be noted that the 
		proximal mapping of a strongly convex function is a contraction (see Lemma 
		\ref{L:StronglyConvexProxContraction} in Appendix \ref{A:ProofStrongMoreau}) and therefore all 
		the theory presented in Section \ref{SSec:XuMethod} is valid in this setting too.
	\end{remark}
	The following result is an immediate consequence of Proposition \ref{P:ConvergenceBiGSAM} applies 
	on the following mappings
	\begin{equation*}
		S\left(\bx\right) = \bx - s\nabla M_{s\omega}\left(\bx\right) \quad \text{and} \quad T
		\left(\bx\right) = \prox_{tg}\left(\bx - t\nabla f\left(\bx\right)\right),
	\end{equation*}
	where $s > 0$ and $t \in \left(0 , 1/L_{f}\right]$. 
	\begin{proposition} \label{P:ConvergenceMoreauOptimal}
		Let $\Seq{\bx}{k}$ be a sequence generated by BiG-SAM and suppose that Assumptions
		\ref{A:Composite}, \ref{A:OmegaNonsmooth} and \ref{A:AlphSeq} hold true and let $s > 0$. Then,
		the sequence $\Seq{\bx}{k}$ converges to $\bx_{s}^{\ast} \in X^{\ast}$ which satisfies
		\begin{equation} \label{MoreauOptimality}
			\act{\nabla M_{s\omega}\left(\bx_{s}^{\ast}\right) , \bx - \bx_{s}^{\ast}} \geq 0, \quad 
			\forall \,\, \bx \in X^{\ast}.
		\end{equation}
		Therefore, $\bx_{s}^{\ast}$ is the optimal solution of problem \eqref{Prob:MNP} with respect 
		to the Moreau envelope $M_{s\omega}$, \ie
		\begin{equation*}
			\bx_{s}^{\ast} = \argmin_{\bx \in X^{\ast}} M_{s\omega}\left(\bx\right),
		\end{equation*}
		where $X^{\ast}$ is the optimal solutions set of problem \eqref{Prob:P}.
	\end{proposition}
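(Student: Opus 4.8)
The plan is to apply Proposition \ref{P:ConvergenceBiGSAM} verbatim, with the outer objective $\omega$ replaced throughout by its Moreau envelope $M_{s\omega}$. Assumptions \ref{A:Composite} and \ref{A:AlphSeq} concern only $f$, $g$ and $\seq{\alpha}{k}$ and therefore carry over unchanged, so the only thing to check is that $M_{s\omega}$ plays the role of an admissible smooth outer objective, \ie that it satisfies Assumption \ref{A:Omega}. Strong convexity of $M_{s\omega}$ with parameter $\sigma/\left(1+s\sigma\right) > 0$, hence part (i), is exactly the content of Proposition \ref{P:StrongConvexMoreauEnvelope} (which uses the strong convexity of $\omega$ granted by Assumption \ref{A:OmegaNonsmooth}); continuous differentiability together with the $\left(1/s\right)$-Lipschitz continuity of $\nabla M_{s\omega}$, hence part (ii), is the standard property recorded in \eqref{MoreauEnvelopeGradient}.

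The only point requiring genuine attention is the step-size restriction. Written for $M_{s\omega}$, the $S$-step \eqref{OmegaGradStep} uses step size $s$ --- precisely the Moreau parameter --- and by \eqref{ComputeZnonsmooth} it collapses to $S\left(\bx\right) = \prox_{s\omega}\left(\bx\right)$. For Proposition \ref{P:ConvergenceBiGSAM} to apply one needs $s$ to lie in the admissible interval $\left(0 , 2/\left(L_{M} + \sigma_{M}\right)\right]$, where here $L_{M} = 1/s$ and $\sigma_{M} = \sigma/\left(1+s\sigma\right)$. A direct computation gives
\[
	\frac{2}{L_{M} + \sigma_{M}} = \frac{2s\left(1+s\sigma\right)}{1+2s\sigma} \geq s,
\]
the inequality holding for every $s > 0$ since $2\left(1+s\sigma\right) \geq 1+2s\sigma$. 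Hence $S$ is a contraction for any $s > 0$, which is consistent with --- and can alternatively be read off from --- the remark that $\prox_{s\omega}$ is contractive by strong convexity of $\omega$ (Lemma \ref{L:StronglyConvexProxContraction}). This is exactly why no restriction on $s$ appears in the statement.

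With these verifications in place, Proposition \ref{P:ConvergenceBiGSAM} applied to the pair $S\left(\bx\right) = \bx - s\nabla M_{s\omega}\left(\bx\right)$ and $T\left(\bx\right) = \prox_{tg}\left(\bx - t\nabla f\left(\bx\right)\right)$ yields at once that $\Seq{\bx}{k}$ converges to some $\bx_{s}^{\ast} \in X^{\ast}$ satisfying $\act{\nabla M_{s\omega}\left(\bx_{s}^{\ast}\right) , \bx - \bx_{s}^{\ast}} \geq 0$ for all $\bx \in X^{\ast}$, which is \eqref{MoreauOptimality}. Finally, this variational inequality is precisely the first order optimality condition for the convex constrained problem $\min_{\bx \in X^{\ast}} M_{s\omega}\left(\bx\right)$; since $M_{s\omega}$ is strongly convex this minimizer is unique, so $\bx_{s}^{\ast} = \argmin_{\bx \in X^{\ast}} M_{s\omega}\left(\bx\right)$, as asserted. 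I do not anticipate any real obstacle: the whole argument is a reduction to the already-established smooth case, and the one substantive check --- that the natural step-size choice is always admissible --- is the short computation displayed above.
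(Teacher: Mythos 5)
Your proof is correct and takes essentially the same approach as the paper, whose own proof consists only of the remark that the argument is ``similar to the proof of Proposition \ref{P:ConvergenceBiGSAM} using \eqref{ComputeZnonsmooth}.'' Your explicit check that the step size $s$ always lies in the admissible interval $\left(0 , 2/\left(L_{M} + \sigma_{M}\right)\right]$ with $L_{M} = 1/s$ and $\sigma_{M} = \sigma/\left(1 + s\sigma\right)$ is a worthwhile detail that the paper leaves implicit.
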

	\begin{proof}
		Similar to the proof of Proposition \ref{P:ConvergenceBiGSAM} using \eqref{ComputeZnonsmooth}.
	\end{proof}
	The section began with the goal of solving problem \eqref{Prob:MNP} for which $\omega$ is not 
	necessarily smooth. To this end we suggested to apply BiG-SAM on the Moreau envelope $M_{s\omega}$ 
	for some step-size $s > 0$ and as a result we get $\bx_{s}^{\ast}$ which minimizes $M_{s\omega}$ 
	over $X^{\ast}$. The step-size $s$ also plays an important role in controlling the distance 
	between $\bx_{s}^{\ast}$ and the original solution $\bx_{mn}^{\ast}$, this will be made precise 
	below.
\medskip

	The fact that we smoothed the outer objective function $\omega$ seems to not influence the rate of 
	convergence result which is in terms of the inner objective function. However, a careful 
	inspection shows that this is not really the case since the rate of convergence result (see 
	Theorem \ref{T:FunctionRate}) depends on the contraction parameter $\beta$ which in this case 
	depends on the smoothing parameter $s$. Indeed, from Lemma \ref{L:StronglyConvexProxContraction} 
	(see Appendix \ref{A:ProofStrongMoreau}), we have that
	\begin{equation*}
		\beta = \frac{1}{1 + s\sigma} .
	\end{equation*}
	Therefore, we suggest the following concept of rate of convergence result which is different than 
	the classical one, but seems to be relevant when discussing algorithms for solving bi-level 
	problems.
\medskip

	Let $\delta > 0$ be the required uniform accuracy in terms of the outer objective function, that 
	is,
	\begin{equation} \label{OmegaAccuracy}
		\omega\left(\bx^{k}\right) - M_{s\omega}\left(\bx^{k}\right) \leq \delta, \quad \forall \, k 
		\in \nn,
	\end{equation}
	where it should be remembered that $\omega\left(\bx^{k}\right) - M_{s\omega}\left(\bx^{k}\right) 
	\geq 0$ for all $k \in \nn$. Now, we would like do determine the number of iterations $K$ that is
	needed to achieve $\varepsilon$-optimal solution of the inner problem, that is,
	\begin{equation*}
		\varphi\left(\bx^{K}\right) - \varphi\left(\bx_{mn}^{\ast}\right) \leq \varepsilon,
	\end{equation*}	
	while keeping the uniform accuracy as given in \eqref{OmegaAccuracy}. This means that $K$ depends 	
	on both $\varepsilon$ and $\delta$.
	\begin{proposition}
		Let $\Seq{\bx}{k}$ be a sequence generated by BiG-SAM and suppose that Assumptions
		\ref{A:Composite}, \ref{A:OmegaNonsmooth} and \ref{A:AlphSeq} hold true. In addition, suppose 
		that the smoothing parameter is chosen by
		\begin{equation*}
			s = \frac{2\delta}{\ell_{\omega}^{2}}.
		\end{equation*}
		Let $t \in \left(0 , 1/L_{f}\right]$. Then, \eqref{OmegaAccuracy} holds true and for
		\begin{equation*}
			k \geq \frac{4C_{\bx_{mn}^{\ast}}^{2}}{t\varepsilon}\left(2 + \frac{3\ell_{\omega}^{2}}
			{2\sigma\delta} + \frac{\ell_{\omega}^{4}}{4\sigma^{2}\delta^{2}}\right) - 1,
		\end{equation*}
		it holds that $\varphi\left(\bx^{k}\right) - \varphi\left(\bx_{mn}^{\ast}\right) \leq 
		\varepsilon$.
	\end{proposition}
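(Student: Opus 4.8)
The plan is to separate the claim into two essentially independent tasks: first, to show that the prescribed smoothing level $s = 2\delta/\ell_{\omega}^{2}$ is exactly what is needed to force the uniform envelope accuracy \eqref{OmegaAccuracy}; and second, to insert the associated contraction constant $\beta = 1/\left(1 + s\sigma\right)$ (supplied by Lemma \ref{L:StronglyConvexProxContraction}) into the rate bound of Theorem \ref{T:FunctionRate} and to solve the resulting scalar inequality for $k$.

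For the first task I would bound the envelope gap $\omega\left(\bx\right) - M_{s\omega}\left(\bx\right)$ a priori, uniformly in the iterate. Using the definition of the Moreau envelope together with the $\ell_{\omega}$-Lipschitz continuity of $\omega$, namely $\omega\left(\bu\right) \geq \omega\left(\bx\right) - \ell_{\omega}\norm{\bu - \bx}$ for every $\bu$, and writing $r = \norm{\bu - \bx}$, one obtains
\[
	M_{s\omega}\left(\bx\right) \geq \omega\left(\bx\right) + \min_{r \geq 0} \left\{ -\ell_{\omega} r + \frac{r^{2}}{2s} \right\} = \omega\left(\bx\right) - \frac{s\ell_{\omega}^{2}}{2},
\]
the scalar minimum being attained at $r = s\ell_{\omega}$. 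Hence $0 \leq \omega\left(\bx\right) - M_{s\omega}\left(\bx\right) \leq s\ell_{\omega}^{2}/2$ for all $\bx$, and substituting $s = 2\delta/\ell_{\omega}^{2}$ makes the right-hand side equal to $\delta$, which is precisely \eqref{OmegaAccuracy}.

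For the second task I would start from Theorem \ref{T:FunctionRate}, applied to the Moreau envelope, which gives
\[
	\varphi\left(\by^{k}\right) - \varphi\left(\bx_{mn}^{\ast}\right) \leq \frac{2C_{\bx_{mn}^{\ast}}^{2}\left(J + 2\right)}{\left(k + 1\right)\left(1 - \beta\right)t}.
\]
With $\beta = 1/\left(1 + s\sigma\right)$ one has $1 - \beta = s\sigma/\left(1 + s\sigma\right)$, so that $1/\left(1 - \beta\right) = 1 + v$ with $v := 1/\left(s\sigma\right)$, whence $J = \lfloor 2/\left(1 - \beta\right) \rfloor \leq 2\left(1 + v\right)$ and $J + 2 \leq 2\left(v + 2\right)$. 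Multiplying the two factors gives $\left(J + 2\right)/\left(1 - \beta\right) \leq 2\left(v + 2\right)\left(v + 1\right) = 2\left(v^{2} + 3v + 2\right)$, so the rate bound is dominated by $4C_{\bx_{mn}^{\ast}}^{2}\left(v^{2} + 3v + 2\right)/\left(\left(k + 1\right)t\right)$. Requiring that this not exceed $\varepsilon$ and solving for $k$ yields $k \geq \left(4C_{\bx_{mn}^{\ast}}^{2}/\left(t\varepsilon\right)\right)\left(2 + 3v + v^{2}\right) - 1$; substituting $v = 1/\left(s\sigma\right) = \ell_{\omega}^{2}/\left(2\delta\sigma\right)$ then turns $3v$ into $3\ell_{\omega}^{2}/\left(2\delta\sigma\right)$ and $v^{2}$ into $\ell_{\omega}^{4}/\left(4\delta^{2}\sigma^{2}\right)$, reproducing the stated threshold exactly.

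I expect the only genuinely delicate step to be the a priori envelope-gap estimate, since it is the one place where the Lipschitz hypothesis (Assumption \ref{A:OmegaNonsmooth}) is used in an essential, quantitative way and where the precise constant $s\ell_{\omega}^{2}/2$ must emerge; everything after that is algebraic bookkeeping, the key simplification being the substitution $v = 1/\left(s\sigma\right)$ which keeps the polynomial in $v$ clean. A minor point to track is that Theorem \ref{T:FunctionRate} controls the feasible iterate $\by^{k}$ rather than $\bx^{k}$ directly; since $\norm{\by^{k} - \bx^{k - 1}} \to \bo$ at rate $O(1/k)$ by Lemma \ref{L:ConvergenceRate}, the same threshold governs the iterate in the sense used in Section \ref{Sec:ConvergenceAnalysis}.
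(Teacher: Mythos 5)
Your proposal is correct and follows essentially the same route as the paper: establish the uniform envelope gap $\omega - M_{s\omega} \leq s\ell_{\omega}^{2}/2$ (which the paper simply cites from an external lemma, whereas you derive it directly from the Lipschitz bound and a one-dimensional minimization — a correct and self-contained substitute), then plug $\beta = 1/\left(1 + s\sigma\right)$ into Theorem \ref{T:FunctionRate} and solve for $k$; your substitution $v = 1/\left(s\sigma\right)$ is just a cleaner bookkeeping of the paper's identical computation $\left(J+2\right)/\left(1-\beta\right) \leq 2\left(2 - \beta\right)/\left(1-\beta\right)^{2}$. Your closing remark about the bound being stated for $\by^{k}$ rather than $\bx^{k}$ matches the paper's own (equally informal) treatment of that point.
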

	\begin{proof}
		Since $\omega$ is $\ell_{\omega}$-Lipschitz continuous (see Assumption \ref{A:OmegaNonsmooth}) 
		it follows that the norms of the subgradients of $\omega$ are bounded from above by 
		$\ell_{\omega}$. Thus, from \cite[Lemma 4.2]{BT2012} it follows, for all $\bx \in \real^{n}$, 
		that
		\begin{equation} \label{Eq:1}
			\omega\left(\bx\right) - \frac{s\ell_{\omega}^{2}}{2}\leq M_{s\omega}\left(\bx\right) \leq 
			\omega\left(\bx\right).
		\end{equation}
		Therefore, for $s = 2\delta/\ell_{\omega}^{2}$, we obtain that
		\begin{equation*} 
			\omega\left(\bx^{k}\right) - M_{s\omega}\left(\bx^{k}\right) \leq \delta, \quad \forall \, 
			k \in \nn.
		\end{equation*}
		Using Theorem \ref{T:FunctionRate} we have that		
		\begin{equation*}
			\varphi\left(\by^{k + 1}\right) - \varphi\left(\bx_{mn}^{\ast}\right) \leq
			\frac{2C_{\bx_{mn}^{\ast}}^{2}\left(J + 2\right)}{\left(k + 1\right)\left(1 - \beta
			\right)t},
		\end{equation*}
		where $J = \lfloor 2/\left(1 - \beta\right) \rfloor$. Substituting $\beta =  1/\left(1 + s
		\sigma\right)$ in the above bound yields that
		\begin{align*}
			\varphi\left(\by^{k + 1}\right) - \varphi\left(\bx_{mn}^{\ast}\right) & \leq
			\frac{2C_{\bx_{mn}^{\ast}}^{2}\left(\frac{2}{1 - \beta} + 2\right)}{\left(k + 1\right)
			\left(1 - \beta\right)t} \\ 
			& = \frac{4C_{\bx_{mn}^{\ast}}^{2}}{\left(k + 1\right)t} \cdot \frac{2 - \beta}{\left(1 - 
			\beta\right)^{2}} \\
			& = \frac{4C_{\bx_{mn}^{\ast}}^{2}}{\left(k + 1\right)t}\left(\frac{\left(1 + s\sigma
			\right)^{2}}{\left(s\sigma\right)^{2}} + \frac{1 + s\sigma}{s\sigma}\right) \\	
			& = \frac{4C_{\bx_{mn}^{\ast}}^{2}}{\left(k + 1\right)t}\left(2+\frac{3}{s\sigma}+\frac{1}
			{(s\sigma)^2}\right).	
		\end{align*}
		Now, we use the smoothing parameter that we found above to obtain that
		\begin{equation*}
			\varphi\left(\by^{k + 1}\right) - \varphi\left(\bx_{mn}^{\ast}\right) \leq
			\frac{4C_{\bx_{mn}^{\ast}}^{2}}{\left(k + 1\right)t}\left(2 + \frac{3\ell_{\omega}^{2}}
			{2\sigma\delta} + \frac{\ell_{\omega}^{4}}{4\sigma^{2}\delta^{2}}\right).
		\end{equation*}
		Thus, given $\varepsilon > 0$, in order to obtain $\varphi\left(\by^{k + 1}\right) - \varphi
		\left(\bx_{mn}^{\ast}\right) \leq \varepsilon$ it remains to find values of $k$ for which
		\begin{equation*}
			\frac{4C_{\bx_{mn}^{\ast}}^{2}}{\left(k + 1\right)t}\left(2 + \frac{3\ell_{\omega}^{2}}
			{2\sigma\delta} + \frac{\ell_{\omega}^{4}}{4\sigma^{2}\delta^{2}}\right)\leq \varepsilon,
		\end{equation*}
		which is equivalent to 
		\begin{equation*}
			k \geq \frac{4C_{\bx_{mn}^{\ast}}^{2}}{t\varepsilon}\left(2 + \frac{3\ell_{\omega}^{2}}
			{2\sigma\delta} + \frac{\ell_{\omega}^{4}}{4\sigma^{2}\delta^{2}}\right) - 1.
		\end{equation*}
		The desired result is obtained by choosing $k$ to be the upper bound just obtained.
	\end{proof}
	Two remarks on the just obtained result.
	\begin{remark}
		\begin{itemize}
			\item[$\rm{(i)}$] The uniform accuracy property mentioned in \eqref{OmegaAccuracy} yields 
				that the limit point $\bx_{s}^{\ast}$ of the sequence generated by BiG-SAM satisfies 
				that
				\begin{equation*}
					\omega\left(\bx_{s}^{\ast}\right) - \omega\left(\bx_{mn}^{\ast}\right) \leq M_{s
					\omega}\left(\bx_{s}^{\ast}\right) + \delta - M_{s\omega}\left(\bx_{mn}^{\ast}
					\right) \leq \delta,
				\end{equation*}
				where the first inequality follows by using the two inequalities given in 
				\eqref{OmegaAccuracy} while the second inequality follows from the fact that $\bx_{s}
				^{\ast}$ is a minimizer of $M_{s\omega}$ over $X^{\ast}$ and obviously $\bx_{mn}
				^{\ast} \in X^{\ast}$. This means that $\delta$ also controls the gap between the 
				wished optimal value of $\omega$, that is, $\omega\left(\bx_{mn}^{\ast}\right)$, and 
				the value of $\omega$ evaluated at the optimal solution of the smoothed problem.
			\item[$\rm{(ii)}$] The number of iterations needed to achieve a desired inner function 
				accuracy $\varepsilon$ is therefore $O(1/\varepsilon\delta^{2})$. Consequently, 
				increasing the uniform accuracy parameter $\delta$ by an order of magnitude results 
				in increment of two orders of magnitude in the number of iterations. For example, by 
				taking $\delta = \sqrt{\varepsilon}$ one will results with a rate of $O(1/
				\varepsilon^{2})$ in terms of the inner objective function values.
		\end{itemize}
	\end{remark}
	
\section{Numerical Experiments} \label{Sec:Numerical Experiments}
	In this section we consider the inverse problems tested in \cite[Section 5.2.2]{BS2014} and 
	present a numerical comparison between the MNG and BiG-SAM methods. Linear inverse problems seeks 
	to reconstruct a vector $\bx \in \real^{n}$ from a set of measurements $\bb \in \real^{m}$ which 
	satisfy the following relation $\bb = \bba\bx + \rho\epsilon$ where $\bba : \real^{n} \rightarrow 
	\real^{m}$ is a given linear mapping, $\epsilon \in \real^{m}$ denotes an unknown noise vector and 
	$\rho > 0$ denotes its magnitude. 
\medskip

	There are several ways to solve linear inverse problems using optimization techniques, but here we 
	will focus on the following bi-level formulation. In this case, the inner objective function is 
	defined by
	\begin{equation*}
		\varphi\left(\bx\right) := \norm{\bba\bx - \bb}^{2} + \delta_{X}\left(\bx\right),
	\end{equation*}
	where $\delta_{X}$ is the indicator function over the non-negative orthant $X = \left\{ \bx \in 
	\real^{n} : \, \bx \geq 0 \right\}$. The outer objective function is given by
	\begin{equation*}
		\omega\left(\bx\right) = \frac{1}{2}\bx^{T}\bbq\bx,
	\end{equation*}
	where $\bbq$ is a certain positive definite matrix.
\medskip

	Following \cite{BS2014} we consider three inverse problems phillips, baart, and foxgood which 
	can be found in the ``regularization tools" website\footnote{see \url{http://www2.imm.dtu.dk/~pcha/Regutools/}}.  	
\medskip
	
	For each of these inverse problems we generated the corresponding $1,000 \times 1,000$ exact 
	linear system $\bba\bx = \bb$ by applying the relevant function ('philips','baart','foxgood'). We 
	then performed $100$ Monte-Carlo simulations by adding normally distributed noise with zero mean 
	to the right-hand side vector $\bb$, using three different choices of standard deviation: $\rho = 
	10^{-1}, 10^{-2}, 10^{-3}$. The matrix $\bbq$ is defined by $\bbq = \bbl\bbl' + \bbi$ where $\bbl$ 
	is generated by the function \texttt{get\_l(1,000,1)} from the ``regularization tools" and 
	approximates the first-derivative operator.
\medskip

	In order to implement the MNG method, we need to compute $\bbq^{-1/2}$ and $\bbq^{1/2}$ before the 
	algorithm starts. However, note that while $\bbq$ may be a sparse matrix, the matrices $\bbq^{-1/
	2}$ and $\bbq^{1/2}$ may not be, even if we use other decompositions, such as the Cholesky 
	decomposition. Since the MNG method requires the starting point to be the optimal solution of the 
	unconstrained minimization of $\omega$, we start both algorithms from the point $\bo$.
\medskip

	We tested BiG-SAM with three different choices of the parameter $\gamma$ which are $0.1, 0.5$ and 
	$1$ (see the discussion before Lemma \ref{L:ConvergenceRate} about the parameter $\gamma$). All 
	experiments were ran on a Unix server with 32 Intel Xeon CPUs E5-2690 @2.9GHz and 250GB RAM, using 
	MATLAB R2016a, with no parallelization.
\medskip

	In Table \ref{tbl:TimeComparison} we present the mean time (out of $100$ runs) until the algorithm 
	(MNG and BiG-SAM) reach the stopping criteria $\left(\varphi\left(\by^{k}\right) - \varphi^{\ast}
	\right)/\varphi^{\ast} < 10^{-2}$, where $\varphi^{\ast}$ is the optimal value of the inner problem. 
	If this stopping criteria was not achieved, then we stopped the algorithm after $500$ seconds. In 
	Table \ref{tbl:PerformanceComparison} we present the mean relative feasibility gap (RFG) given by 
	$\Delta \varphi = \left(\varphi\left(\by^{k}\right) - \varphi^{\ast}\right)/\varphi^{\ast}$ and the 
	mean relative optimality gap (ROG) given by $\Delta \omega = \left|\omega\left(\by^{k}\right) - 
	\omega^{\ast}\right|/\omega^{\ast}$, for each algorithm after a running time of $250$ seconds.		
\medskip

	The values of $\varphi^{\ast}$ and $\omega^{\ast}$ were calculated in advance using CVX \cite{cvx} 
	for MATLAB and GUROBI version 7.0.1 solver, which we will refer to hereafter as the standard solver. 
	The value of $\varphi^{\ast}$ was computed as the optimal solution of the inner problem. The value 
	$\omega^{\ast}$ is a lower bound on the optimal value of the outer problem, obtained by solving the 
	following convex problem 
	\begin{equation*}
		\min \left\{ \omega\left(\bx\right) : \, \bx \geq 0, \, \varphi\left(\bx\right) \leq 
		\varphi^{\ast}(1 + \mu) \right\},
	\end{equation*}
	where $\mu$ is a small number for which the problem was solvable (we used $10^{-4}$).
\vspace{0.2in}

	\begin{table}[h]
		\renewcommand{\arraystretch}{1}
		\centering		
		\begin{tabular}{ll|llll}
			\hline
			Problem & $\rho$ & \multicolumn{4}{c}{Mean time (Number of realization terminated at time 
			limit)} \\
			\cline{3-6} 
			&& \multicolumn{3}{c}{BiG-SAM} & MNG \\
			\cline{3-5}
			&& $\gamma = 0.1$ & $\gamma = 0.5$ & $\gamma = 1$ & \\
			\hline
			\hline
			\multirow{3}{*}{Baart}    & $10^{-1}$ & {\bf 5.37e$-$3} (0) &	3.62e$-$2 (0)   & 6.08e$-$2   
			(0) & 2.92e$-$1 (0)\\
			& $10^{-2}$ & {\bf 1.51e$-$1} (0)&	5.03e$-$1  (0)  & 8.26e$-$1   (0)      & 4.40   (0)\\
			& $10^{-3}$ & {\bf 9.78} (0)&	2.23e$+$1   (0) & 3.57e$+$1  (0)  & 4.18e$+$2 (31)\\
			\hline
			\multirow{3}{*}{Foxgood}  & $10^{-1}$ & {\bf 1.51e$-$2} (0)& 6.88e$-$2  (0)  & 1.06e$-$1	 
			(0)  & 3.33e$-$1 (0)\\
			& $10^{-2}$ & {\bf 4.47e$-$1} (0)&	1.20    (0)     & 2.17  (0)       & 3.65 (0)\\
			& $10^{-3}$ & {\bf 1.30e$+$1} (1)&	2.99e$+$1   (0) & 4.43e$+$1	(1)   & 2.93e$+$1 (1)\\
			\hline
			\multirow{3}{*}{Phillips} & $10^{-1}$ & {\bf 1.13e$-$2} (0)&	3.90e$-$2   (0) & 6.58e$-$2  
			(0)  & 4.02e$-$1 (0)\\
			& $10^{-2}$ & {\bf 2.44}   (0)   &	6.77 (0)    & 9.83 (0)   & 1.67e$+$2 (5)\\
			& $10^{-3}$ & {\bf 4.93e$+$2}    (97)& 4.98e$+$2  (98)& 4.99e$+$2  (99)& 5.00e$+$2  (100)\\
			\hline
		\end{tabular}
		\caption{Comparison between MNG and BiG-SAM ($3$ versions) of mean running times (in seconds) 
		until termination and the number of realizations terminated because of the time limit (of $500$ 
		seconds) over $100$ realization. The comparison is across the different problem instances and 
		noise magnitude $\rho$.}
		\label{tbl:TimeComparison}
	\end{table}	
	 
	The average number of iterations needed to reach these results for the MNG method was usually higher 
	than that of BiG-SAM with $\gamma = 0.1$ but lower than that of BiG-SAM with $\gamma = 0.5$. 
	However, as we can clearly see in the table above the higher iteration cost of this method causes 
	the mean time of the MNG method to be the highest, in most cases, and causes the algorithm to stop 
	because of time limit rather than because it reached the termination criteria.  
\medskip
	
	In Table \ref{tbl:PerformanceComparison} we see that when all methods are ran for the same amount of 
	time, all BiG-SAM variants (except for BiG-SAM with $\gamma = 1$ for the Foxgood with $\rho = 
	0.001$) obtain superior $\Delta\varphi$ values compared to the MNG method, up to $2$ orders of 
	magnitude better. Moreover, in most cases all BiG-SAM variants obtain slightly better $\Delta\omega$ 
	values compared to the MNG method, a fact which is more pronounced for higher values of the noise 
	$\rho$. 
	
	\begin{table}[h]
		\renewcommand{\arraystretch}{1}
		\centering		
		\begin{tabular}{l|c|llll|llll}
			\hline
			Problem & $\rho$ & \multicolumn{4}{c|}{Mean RFG ($\Delta \varphi$ in $\%$)} & 
			\multicolumn{4}{c}{Mean ROG ($\Delta \omega$ in $\%$)}\\
			\cline{3-10} 
			&& \multicolumn{3}{c}{BiG-SAM} & MNG &\multicolumn{3}{c}{BiG-SAM} & MNG \\
			\cline{3-5}\cline{7-9}
			&& $\gamma = 0.1$ & $\gamma = 0.5$ & $\gamma = 1$ & & $\gamma = 0.1$ & $\gamma = 0.5$ & $
			\gamma = 1$ &\\
			\hline
			\hline
			\multirow{3}{*}{Baart}    & $10^{-1}$ & {\bf 6.93e-4} & 1.31e-3 &	1.80e-3 & 3.11e-2 & 26.12 
			&	{\bf 23.87} &	24.88 &	58.08 \\
			& $10^{-2}$ & {\bf 4.61e-2} & 5.42e-2& 5.64e-2 & 1.37e-1 & {\bf 58.13} &58.83 &59.05 & 
			61.16\\
			& $10^{-3}$ & {\bf 1.475e-1} & 1.73e-1 & 2.05e-1 & 3.80 & {\bf 53.26} & 53.40 & 53.50& 
			54.94\\
			\hline
			\multirow{3}{*}{Foxgood}  & $10^{-1}$ & {\bf 1.11e-2} &	1.52e-2 &	1.74e-2 & 5.19e-2 & 
			{\bf 48.37} &	51.13 &	52.51 &	59.98\\
			& $10^{-2}$ & {\bf 3.01e-2} & 3.47e-2 &3.82e-2 & 5.40e-2 & 15.42 & 15.29 & {\bf 15.25} & 
			15.28\\
			& $10^{-3}$ & {\bf 1.88e-2}& 2.46e-2 & 3.55e-2 & 2.56e-2 & 1.65 &1.50 &	{\bf 1.41}& 1.45\\
			\hline
			\multirow{3}{*}{Phillips} & $10^{-1}$ &  {\bf 3.84e-2} &	5.11e-2 &	6.00e-2 & 2.40e-1 & 
			{\bf 78.95} & 81.73 & 83.14 & 90.03 \\
			& $10^{-2}$ & {\bf 4.51e-1} &4.98e-1 &5.26e-1 &7.44e-1 & {\bf 93.91}&94.01& 94.06& 94.16\\
			& $10^{-3}$ & {\bf 1.75} & 1.82 & 1.87 &	 2.19 &  {\bf 90.16} & 90.16 & 90.16 & 90.17\\
			\hline
		\end{tabular}
		\caption{Comparison of relative feasible gap (RFG) and relative optimal gap (ROG) after $250$ 
		seconds for MNG and BiG-SAM with various parameters, averaged over $100$ realization for each 
		instance of problem and noise magnitude $\rho$.}
		\label{tbl:PerformanceComparison}
	\end{table}	

	In order to better understand this comparison we look at a specific realization of size $100$ for a 
	problem of Phillips type with $\rho = 0.01$. In Figure \ref{fig:f_values_vs._Time} we can see that 
	BiG-SAM (with $\gamma = 1$) and MNG are very close in the first $10$ seconds, but then BiG-SAM 
	starts to improve much faster than the MNG method. Moreover, we can see clearly that lower value of 
	$\gamma$ yields a faster convergence. In Figure \ref{fig:norm_x_xstar_vs_Time} we see the distance 
	between the iteration $\bx^{k}$ and the optimal solution $\bx^{\ast}$ (which was evaluated via the 
	same procedure we used to find $\omega^{\ast}$). We can see the same behavior here as in the first 
	figure, which means, a faster convergence of the BiG-SAM variants. 

	\begin{figure}
		\begin{subfigure}{.5\textwidth}
			\centering
			\includegraphics[width=1\linewidth]{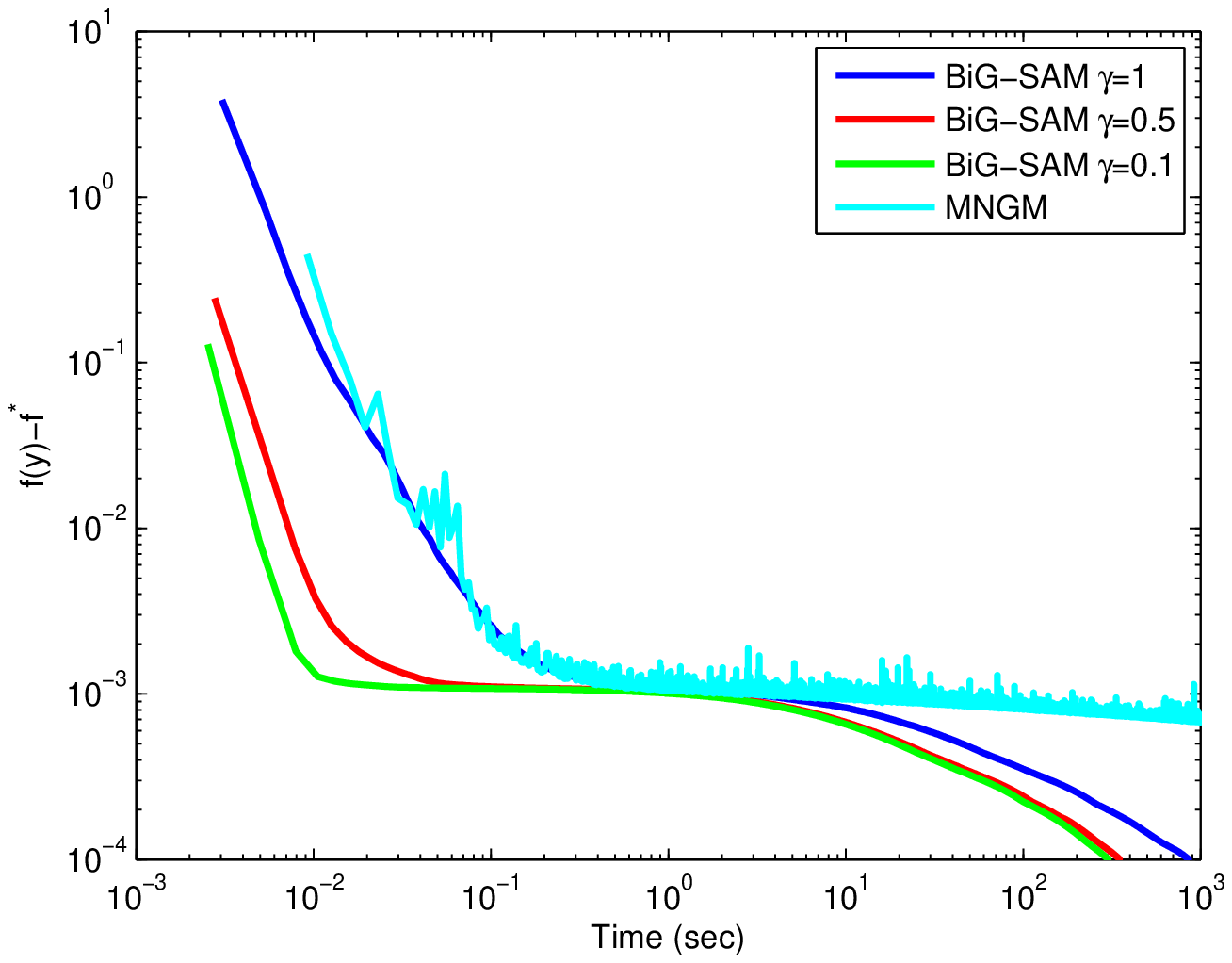}
			\caption{Value of the inner objective function vs. time}
			\label{fig:f_values_vs._Time}
		\end{subfigure}
		\begin{subfigure}{.5\textwidth}
			\centering
			\includegraphics[width=1\linewidth]{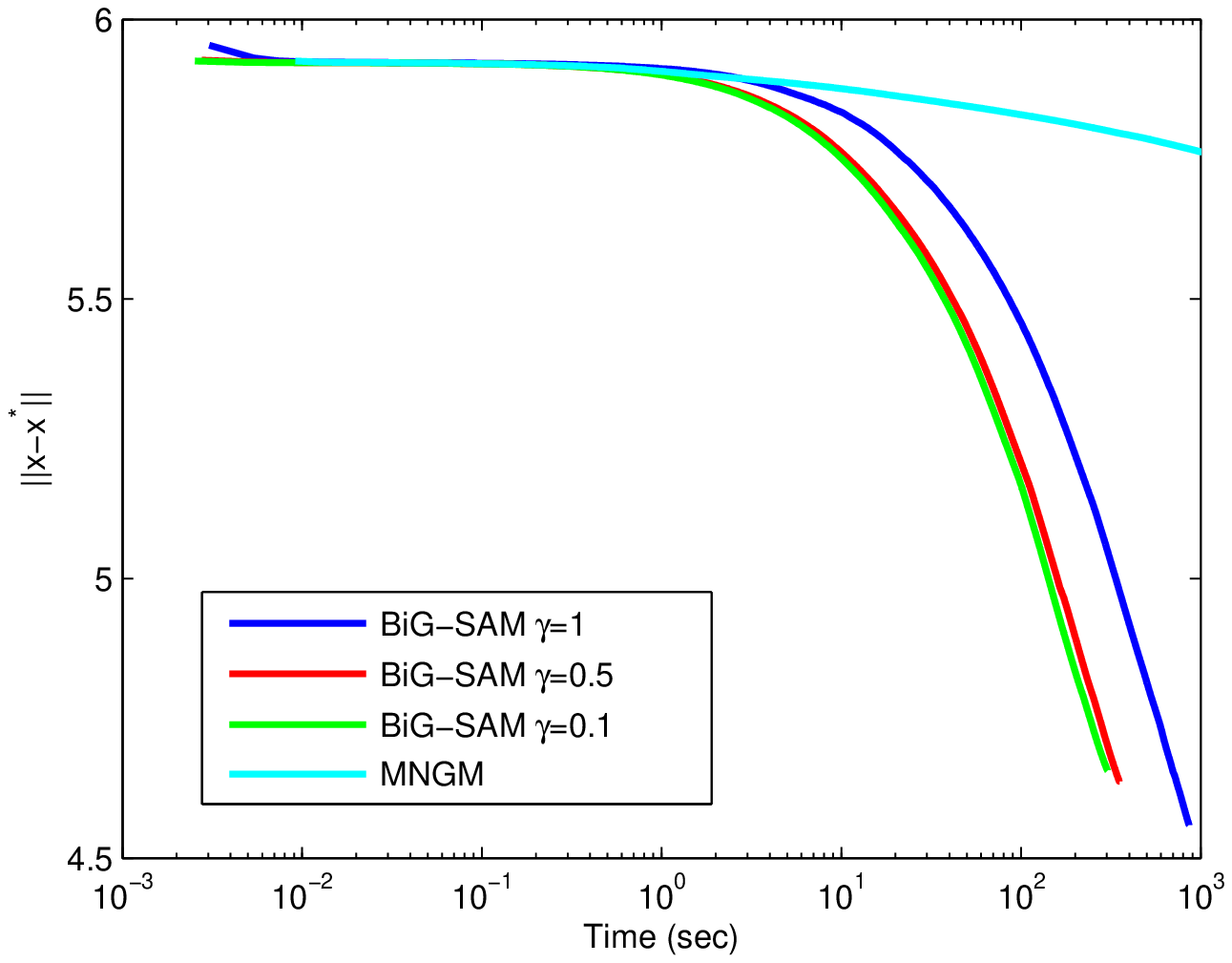}
			\caption{Distance to optimal solution vs. time}
			\label{fig:norm_x_xstar_vs_Time}
		\end{subfigure}
		\caption{The progress of the algorithms in time for a Phillips example with $\rho = 0.01$ and 
		$n = 100$.}
	\end{figure}

\newpage
\begin{appendices}

\section{Proof of Proposition \ref{P:StrongConvexMoreauEnvelope}} \label{A:ProofStrongMoreau}
	We provide two proofs. One is short and based on well-known facts of convex analysis. The second 
	proof is more complicated but provide useful properties of the mappings that play a central role 
	in this work and will be useful in other contexts. We begin with first proof.
	\begin{proof}
		The proof is simple and based on the notion of infimal convolution and its properties. First 
		of all, by its definition we have that the Moreau envelope is the infimal convolution of 
		$\omega$ with the the squared norm function $h\left(\cdot\right) = \left(1/2s\right)
		\norm{\cdot}^{2}$. Thus, for all $\bx \in \real^{n}$, we have that $M_{s\omega}\left(\bx
		\right) = \left(\omega \Box h\right)\left(\bx\right)$. A well-known fact (see 
		\cite[Proposition 13.21(i), Page 187]{BC2011-B}) yields that $\left(\omega \Box h
		\right)^{\ast} = \omega^{\ast} + h^{\ast}$, and therefore $M_{s\omega}^{\ast} = \omega^{\ast} 
		+ h^{\ast}$. Now, the rest of the proof follows from the following known fact: function 
		$\vartheta : \real^{n} \rightarrow \left(-\infty , \infty\right]$ is strongly convex with 
		strong convexity parameter $t$ if and only if its conjugate $\vartheta^{\ast}$ is a 
		continuously differentiable function whose gradient is Lipschitz continuous with constant $1/t
		$. Using this fact twice yields that $M_{s\omega}^{\ast}$ has Lipschitz continuous gradient 
		with constant $s + 1/\sigma$. Now, using the converse implication of the fact gives the 
		desired result.
	\end{proof}
	Before we give the second proof, we will prove that the proximal mapping of a strongly convex 
	function is a $\beta$-contraction, where $\beta = 1/\left(1 + s\sigma\right)$.
	\begin{lemma} \label{L:StronglyConvexProxContraction}
		Let $\omega : \real^{n} \rightarrow \left(-\infty , +\infty\right]$ be a strongly convex 
		function with parameter $\sigma$. Then, for any $s > 0$, it follows $\prox_{s\omega}$ is a 
		$\beta$-contraction, where $\beta = 1/\left(1 + s\sigma\right)$, that is,
		\begin{equation*}
			\norm{\prox_{s\omega}\left(\bx\right) - \prox_{s\omega}\left(\by\right)} \leq \frac{1}{1 + 
			s\sigma}\norm{\bx - \by}, \quad \forall \,\, \bx , \by \in \real^{n}.
		\end{equation*}
	\end{lemma}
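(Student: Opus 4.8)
The plan is to prove that $\prox_{s\omega}$ is a $\beta$-contraction with $\beta = 1/(1+s\sigma)$ by exploiting the strong monotonicity of the subdifferential of a strongly convex function, together with the variational characterization of the proximal mapping. The starting point is the first-order optimality condition: by definition of the proximal mapping, $\bp = \prox_{s\omega}(\bx)$ is characterized by the inclusion $\bx - \bp \in s\,\partial\omega(\bp)$, that is, $(1/s)(\bx - \bp) \in \partial\omega(\bp)$. I would write the analogous condition for $\bq = \prox_{s\omega}(\by)$, namely $(1/s)(\by - \bq) \in \partial\omega(\bq)$.

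The key analytic input is that strong convexity of $\omega$ with parameter $\sigma$ is equivalent to $\sigma$-strong monotonicity of its subdifferential: for any $\bu \in \partial\omega(\bp)$ and $\bv \in \partial\omega(\bq)$ one has
\begin{equation*}
	\act{\bu - \bv , \bp - \bq} \geq \sigma\norm{\bp - \bq}^{2}.
\end{equation*}
I would substitute the two subgradients obtained above, namely $\bu = (1/s)(\bx - \bp)$ and $\bv = (1/s)(\by - \bq)$, which gives
\begin{equation*}
	\frac{1}{s}\act{\left(\bx - \by\right) - \left(\bp - \bq\right) , \bp - \bq} \geq \sigma\norm{\bp - \bq}^{2}.
\end{equation*}
Multiplying through by $s$ and rearranging, this becomes $\act{\bx - \by , \bp - \bq} \geq \left(1 + s\sigma\right)\norm{\bp - \bq}^{2}$.

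From here the conclusion is a short application of Cauchy--Schwarz. The left-hand side is bounded above by $\norm{\bx - \by}\cdot\norm{\bp - \bq}$, so I obtain
\begin{equation*}
	\left(1 + s\sigma\right)\norm{\bp - \bq}^{2} \leq \norm{\bx - \by}\cdot\norm{\bp - \bq}.
\end{equation*}
If $\bp = \bq$ the desired inequality is trivial; otherwise I divide both sides by $\left(1 + s\sigma\right)\norm{\bp - \bq} > 0$ to conclude $\norm{\bp - \bq} \leq \left(1/\left(1 + s\sigma\right)\right)\norm{\bx - \by}$, which is exactly the claimed contraction estimate with $\beta = 1/(1+s\sigma)$. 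Since $s > 0$ and $\sigma > 0$ we indeed have $\beta < 1$.

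I do not anticipate a genuine obstacle here, as the result is classical; the only point requiring minor care is justifying the equivalence between strong convexity of $\omega$ and $\sigma$-strong monotonicity of $\partial\omega$, and ensuring the optimality characterization $\bx - \bp \in s\,\partial\omega(\bp)$ is invoked correctly (it follows from Fermat's rule applied to the strongly convex, hence uniquely minimized, proximal subproblem, so $\prox_{s\omega}$ is single-valued and the subgradient inclusion is exact rather than merely an approximation).
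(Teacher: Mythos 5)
Your proof is correct, but it follows a genuinely different route from the paper's. You argue directly through the variational characterization $\bx - \bp \in s\,\partial\omega(\bp)$ and the $\sigma$-strong monotonicity of $\partial\omega$, arriving at $\act{\bx - \by , \bp - \bq} \geq \left(1 + s\sigma\right)\norm{\bp - \bq}^{2}$ and finishing with Cauchy--Schwarz. The paper instead extracts the quadratic part of $\omega$: it sets $\phi\left(\bx\right) = s\omega\left(\bx\right) - \left(s\sigma/2\right)\norm{\bx}^{2}$, which is convex by strong convexity of $\omega$, completes the square in the prox subproblem to obtain the identity $\prox_{s\omega}\left(\bx\right) = \prox_{\frac{1}{1 + s\sigma}\phi}\left(\bx/\left(1 + s\sigma\right)\right)$, and then invokes the standard nonexpansiveness of the proximal mapping of a convex function, so the contraction factor appears from the rescaling of the argument. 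Your monotone-operator argument is shorter, more self-contained (it needs only Fermat's rule, strong monotonicity of the subdifferential, and Cauchy--Schwarz), and in fact proves the slightly stronger firm-contraction inequality $\act{\bx - \by , \bp - \bq} \geq \left(1 + s\sigma\right)\norm{\bp - \bq}^{2}$ along the way; the paper's argument buys a reusable structural identity expressing the prox of a strongly convex function as the prox of a convex function evaluated at a shrunk point, at the cost of relying on the externally cited nonexpansiveness result. Both are complete and rigorous; the one point you flag, the equivalence of $\sigma$-strong convexity with $\sigma$-strong monotonicity of the subdifferential, is indeed standard and poses no obstacle.
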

	\begin{proof}
		We first define an auxiliary function $\phi$ by $\phi\left(\bx\right) = s\omega\left(\bx
		\right) - \left(s\sigma/2\right)\norm{\bx}^{2}$. Since $\omega$ is strongly convex with 
		parameter $\sigma$ it follows that $\varphi$ is convex. Hence, by the definition of the 
		proximal mapping, we obtain
		\begin{align}
			\prox_{s\omega}\left(\bx\right) & = \argmin_{\bu \in \real^{n}} \left\{ s\omega\left(\bu
			\right) + \frac{1}{2}\norm{\bu - \bx}^{2} \right\} \nonumber \\
			& = \argmin_{\bu \in \real^{n}} \left\{ \phi\left(\bu\right) + \frac{s\sigma}{2}\norm{\bu}
			^{2} + \frac{1}{2}\norm{\bu - \bx}^{2} \right\} \nonumber \\
			& = \argmin_{\bu \in \real^{n}} \left\{ \phi\left(\bu\right) + \frac{1 + s\sigma}{2}
			\norm{\bu - \frac{1}{1 + s\sigma}\bx}^{2} \right\} \nonumber \\
			& = \argmin_{\bu \in \real^{n}} \left\{ \frac{1}{1 + s\sigma}\phi\left(\bu\right) + 
			\frac{1}{2}\norm{\bu - \frac{1}{1 + s\sigma}\bx}^{2} \right\} \nonumber \\
			& = \prox_{\frac{1}{1 + s\sigma}\phi}\left(\frac{\bx}{1 + s\sigma}\right).
			\label{L:StronglyConvexProxContraction:1}
		\end{align}
		From \eqref{L:StronglyConvexProxContraction:1} and the non-expensiveness of the proximal 
		mapping (see \cite[Proposition 12.27, Page 176]{BC2011-B}) we get
		\begin{align*}
			\norm{\prox_{s\omega}\left(\bx\right) - \prox_{s\omega}\left(\by\right)} & =
			\norm{\prox_{\frac{1}{1 + s\sigma}\phi}\left(\frac{\bx}{1 + s\sigma}\right) -
			\prox_{\frac{1}{1 + s\sigma}\phi}\left(\frac{\by}{1 + s\sigma}\right)} \\
			& \leq \frac{1}{1 + s\sigma}\norm{\bx - \by}.
		\end{align*}
		This proves that the proximal mapping is $1/\left(1 + s\sigma\right)$-contraction.
	\end{proof}
	Now we can provide the second proof of Proposition \ref{P:StrongConvexMoreauEnvelope}.
	\begin{proof}
		By \eqref{MoreauEnvelopeGradient} and using the Cauchy-Schwartz inequality we have that
		\begin{align*}
			\act{\nabla M_{s\omega}\left(\bx\right) - \nabla M_{s\omega}\left(\by\right) , \bx - \by}
			& = \frac{1}{s}\act{\bx - \by - \left(\prox_{s\omega}\left(\bx\right) - \prox_{s\omega}
			\left(\by\right)\right) , \bx - \by} \\
			& = \frac{1}{s}\norm{\bx - \by}^{2} - \frac{1}{s}\act{\prox_{s\omega}\left(\bx\right) - 
			\prox_{s\omega}\left(\by\right) , \bx - \by} \\
			& \geq \frac{1}{s}\norm{\bx - \by}^{2} - \frac{1}{s}\norm{\bx - \by}\cdot\norm{\prox_{s
			\omega}\left(\bx\right) - \prox_{s\omega}\left(\by\right)}.
		\end{align*}
		By Lemma \ref{L:StronglyConvexProxContraction} we have that
		\begin{equation*}
			\norm{\prox_{s\omega}\left(\bx\right) - \prox_{s\omega}\left(\by\right)} \leq \frac{1}{1 + 
			s\sigma}\norm{\bx - \by}.
		\end{equation*}
		Thus combining the two inequalities we obtain that
		\begin{equation*}
			\act{\nabla M_{s\omega}\left(\bx\right) - \nabla M_{s\omega}\left(\by\right) , \bx - \by} 
			\geq \frac{1}{s}\left(1 - \frac{1}{1 + s\sigma}\right)\norm{\bx - \by}^{2} = \frac{\sigma}
			{1 + s\sigma}\norm{\bx - \by}^{2}.
		\end{equation*}
		Thus we conclude that $M_{s\omega}$ is strongly convex with parameter $\sigma/\left(1 + s
		\sigma\right)$.
	\end{proof}

\section{Proof of Proposition \ref{P:GradContraction}} \label{A:ProofGradContraction}
	Denote $\tilde{\bx} := \bx - s\nabla \omega\left(\bx\right)$ and $\tilde{\by} := \by - s\nabla 
	\omega\left(\by\right)$. By the definition of $\tilde{\bx}$ and $\tilde{\by}$ we have that
	\begin{align}
		\norm{\tilde{\bx} - \tilde{\by}}^{2} & = \norm{\bx - s\nabla \omega\left(\bx\right) - 
		\left(\by - s\nabla \omega\left(\by\right)\right)}^{2} \nonumber \\
		& = \norm{\bx - \by}^{2} - 2s\act{\nabla \omega\left(\bx\right) - \nabla \omega\left(\by
		\right) , \bx - \by} + s^{2}\norm{\nabla \omega\left(\bx\right) - \nabla \omega\left(\by
		\right)}^{2}. 
		\label{P:GradContraction:1}
	\end{align}
	Since $\omega$ is $\sigma$-strongly convex then by \cite[Theorem 2.1.12, Page 66]{N04} we have 
	that
	\begin{equation} \label{P:GradContraction:2}
		\act{\nabla \omega\left(\bx\right) - \nabla \omega\left(\by\right) , \bx - \by} \geq 
		\frac{\sigma L_{\omega}}{\sigma + L_{\omega}}\norm{\bx - \by}^{2} + \frac{1}{\sigma + 
		L_{\omega}}\norm{\nabla \omega\left(\bx\right) - \nabla \omega\left(\by\right)}^{2}.
	\end{equation}
	By combining \eqref{P:GradContraction:1} and \eqref{P:GradContraction:2} we obtain that
	\begin{equation*}
		\norm{\tilde{\bx} - \tilde{\by}}^{2} \leq \left(1 - \frac{2s\sigma L_{\omega}}{\sigma + 
		L_{\omega}}\right)\norm{\bx - \by}^{2} + \left(s^{2} - \frac{2s}{\sigma + L_{\omega}}\right)
		\norm{\nabla \omega\left(\bx\right) - \nabla \omega\left(\by\right)}^{2}.
	\end{equation*}
	Therefore, for any $s \leq 2/\left(\sigma + L_{\omega}\right)$, we have that the second term is 
	negative and so
	\begin{equation*}
		\norm{\bx - s\nabla \omega\left(\bx\right) - \left(\by - s\nabla \omega\left(\by\right)
		\right)} \leq \sqrt{1 - \frac{2s\sigma L_{\omega}}{\sigma + L_{\omega}}}\norm{\bx - \by},
	\end{equation*}
	which proves the desired result. \qed

\section{Proof of Lemma \ref{L:Alpha}} \label{A:ProofLemmaAlpha}
	We split the proof into two cases: $k \leq J$ and $k > J$. We will start with proving the desired 
	result for $k \leq J$.
\medskip

	\underline{{\bf Case 1:}}
\smallskip
		
	Since $J \geq 2$ and $\gamma \leq 1$ it follows that $a_{1} \leq M < 2M \leq MJ/\gamma$, and since 
	$b_{k} = 1$ for any $k \leq J$ we have that
	\begin{equation*}
		a_{k} = \left(1 - \gamma\right)a_{k - 1}, \quad k = 2 , 3 , \ldots , J.
	\end{equation*}
	Now, using the fact that $\gamma k \leq J$, we obtain 
	\begin{equation*}
		a_{k} = \left(1 - \gamma\right)^{k -  1}a_{1} \leq a_{1} \leq \frac{Ja_{1}}{\gamma k} \leq 
		\frac{MJ}{\gamma k}, \quad k = 2 , 3 , \ldots , J.
	\end{equation*}
	This proves that the desired result hols true for all $k \leq J$.
\medskip

	\underline{{\bf Case 2:}}
\smallskip			

	We will assume that the claim is true for all $l = 1 , 2 , \ldots , k$ where $k \geq J$ and prove 
	that it is true for $k + 1$. In this case, it is clear that $b_{k + 1} = 2/\left(\gamma\left(k + 
	1\right)\right)$ and $b_{k} \leq 2/\left(\gamma k\right)$. Since $c_{k} \leq M$ and using the 
	induction assumption we obtain
	\begin{align*}
		a_{k + 1} & \leq\left(1 - \gamma b_{k + 1}\right)a_{k} + \left(b_{k} - b_{k + 1}\right) c_{k} 
		\\
		& \leq \left(1 - \frac{2\gamma}{\gamma\left(k + 1\right)}\right)\frac{JM}{\gamma k} + 
		\left(b_{k} - \frac{2}{\gamma\left(k + 1\right)}\right)M \\
		& \leq \left(1 - \frac{2}{k + 1}\right)\frac{JM}{\gamma k} + \left(\frac{2}{k} - \frac{2}{k + 
		1}\right)\frac{M}{\gamma} \\
		& = \frac{k - 1}{k + 1} \cdot \frac{JM}{\gamma k} + \frac{2M}{\gamma k\left(k + 1\right)} \\
		& \leq\frac{JM\left(k - 1\right)}{\gamma k\left(k + 1\right)} + \frac{JM}{\gamma k\left(k + 
		1\right)} \\
		& = \frac{JM}{\gamma\left(k + 1\right)},
	\end{align*}
	where the last inequality follows from the fact that $k > J \geq 2$. Thus the claim for $k > J$ is 
	also proven. This completes the proof of the desired result. \qed
\end{appendices}
		
\bibliographystyle{plain}
\bibliography{notes-1}
\end{document}